\newtheorem{thm}{Theorem}[section]
\newtheorem{defn}[thm]{Definition}
\newtheorem{exmp}[thm]{Example}
\newtheorem{remark}[thm]{Remark}
\newtheorem{cor}[thm]{Corollary}
\begin{document}
\title{The $ L^1 $-Liouville property on graphs}
\author{Andrea Adriani and Alberto G. Setti}
\keywords{Infinite weighted graphs, $L^1$-Liouville property, stochastic completeness, curvature comparison}
\subjclass[2020]{05C63, 31C12, 31C30,  53C21, 39A12}

\maketitle

\begin{abstract}

In this paper we investigate the  $ L^1 $-Liouville property, underlining its connection with  stochastic completeness and other structural features of the graph. We give a characterization of the $ L^1 $-Liouville property in terms of the Green function of the graph and use it to prove its equivalence with stochastic completeness on model graphs. Moreover, we show that there exist stochastically incomplete graphs which satisfy the $ L^1 $-Liouville property and prove some comparison theorems for general graphs based on inner-outer curvatures. We also introduce the Dirichlet $L^1$-Liouville property of subgraphs and prove that if a graph has a Dirichlet $L^1$-Liouville subgraph, then it is $L^1$-Liouville itself. As a consequence, we obtain that the $ L^1$-Liouville property is not affected by a finite perturbation of the graph and, just as in the continuous setting, a graph is $ L^1$-Liouville provided that at least one of its ends is Dirichlet $ L^1$-Liouville.\\

\noindent
DiSTA,   Universit\'a dell'Insubria,  Via Valleggio 11, 22100 Como, Italy\\
 (aadriani@uninsubria.it)\\
DiSAT,   Universit\'a dell'Insubria,  Via Valleggio 11, 22100 Como, Italy\\
 (alberto.setti@uninsubria.it)\\

\end{abstract}

\section{Introduction}

The study of stochastic properties of weighted graphs, such as parabolicity, stochastic completeness and the Feller property, to mention some of the most relevant, essentially deals with properties of solutions to classical equations (Poisson and heat) involving the Laplace operator of the graph. It is of great importance because it highlights a mutual interaction between analytic and geometric properties of the graph under consideration.

This area of investigation has undergone a rapid growth in the past several years also in relation to analysis on metric spaces (\cite{F14,HK,XH11,XH14,HKS20,KL12,KLW}). In particular, the study of weighted graphs where the measure is possibly unrelated to the coefficients of the discrete Laplacian produces a very rich theory which exhibits many unexpected features, see, to mention just a few, \cite{KLW,M09,MW19,RKW11,RKW21} and the comprehensive up to date account in \cite{KLW21}.

Because of this, conditions implying parabolicity, stochastic completeness, etc. may be substantially different from those valid in the setting of Riemannian manifolds, of which graphs represent in many ways a natural discretization.

We mention, among others, the sharp logarithmic curvature lower bound for stochastic completeness, \cite[Theorem 4.11]{MW19}, the fact that  curvature, in particular the Ollivier curvature, does not control volume as well as on manifolds, \cite{AS}, (although, in this respect, we recall that Bonnet-Meyers results may be obtained assuming positive lower bounds on the Ollivier curvature, \cite{LLY11,MW19,Oll07}) and, more generally, the fact that the Laplacian of the distance function is not so tightly tied to volume growth and stochastic properties, see \cite[Example 6.1]{A}.

This originates from the fact that in the graph setting there is not a close relationship between the metric, the measure and the coefficients of the Laplace operator (weighted graphs should be really compared to weighted manifolds). At a technical  level, the discrete structure of a graph introduces unexpected difficulties, generally arising from non-locality and the lack of a chain rule.

Moreover, it should be noted that geometric objects that agree in the continuous setting are different on graphs. However, by using compatible metrics, or considering suitable structures, in many cases it has been possible to find exact analogues of results valid on manifolds (see the interesting and exhaustive surveys \cite{K15,RKW21}).

Since the (super/sub)-harmonicity of a function does not depend on the measure, it is always possible, given a non-costant (super/sub)-harmonic function $ f $, to find a sufficiently small measure $ m $ such that $ f \in L^p(V,m) $ for every $ p \in (0,\infty) $ (see \cite{M09}). On the other hand, under the assumption that a compatible intrinsic metric exists, an analogue of Karp's Liouville Theorem for $ p>1 $, \cite[Theorem 2.2]{Karp82}, was proved in \cite{HK}. There, extending a construction in \cite[Examples 4.1 and 4.2]{HJ14}, the authors also exhibit an example of a graph with a compatible intrinsic metric (which may be interpreted as a metric completeness condition on the graph) supporting a non-constant harmonic function in $ L^p $ for every $ p \in (0,1] $. This shows that a version of the Yau's Liouville theorem for $p<1$, \cite[Theorem 3]{Yau}, extended by K.-T. Sturm (\cite[Theorem 1]{Sturm94}) to the case of local Dirichlet forms, does not hold on graphs. However, it would be interesting to investigate if an $ L^p $-Liouville result for $ p \leq 1 $ can be established under suitable geometric conditions on the graph.


In this paper we consider the limit case $ p=1 $ and address the problem of the existence of non-constant, non-negative $L^1$ super-harmonic functions. We say that a graph satisfies the $ L^1$-Liouville property, or it is $ L^1$-Liouville, if every non-negative, super-harmonic function in $ L^1 $ is constant. Of course, starting from a non-parabolic graph and suitably changing the measure one can always find a non-constant, non-negative $L^1$ super-harmonic function. Thus, the problem only makes sense if one introduces suitable restrictions on the graph as in the extension of Karp's result.

Rather than assuming the existence of a compatible intrinsic metric, we consider the relationship of the $ L^1$-Liouville property with other stochastic properties, most notably stochastic completeness. Our study follows the lines of \cite{BPS,PPS} in the setting of Riemannian manifolds. The main issues originate from the, already underlined, non-locality of the Laplacian operator, which yields in turn the lack of a chain rule. Despite these remarkable differences, we are able to obtain various analogues of results valid for Riemannian manifolds. In particular, after having recalled that stochastic completeness implies the $ L^1 $-Liouville property (see \cite{HK}), we first show that these properties are equivalent for model graphs and then we exhibit examples of stochastically incomplete graphs with the $L^1$-Liouville property (Examples \ref{exmp_1} and \ref{exmp_2}) and establish comparison results in terms of inner-outer curvatures (Theorems \ref{comparison 2} and \ref{comparison 1}). In the final part of the paper we introduce the Dirichlet $L^1$-Liouville property of subgraphs and prove that if a graph has a Dirichlet $L^1$-Liouville subgraph, then it is $L^1$-Liouville itself. As in the Riemannian case, this is in contrast to what happens for stochastic completeness which holds if and only if all ends are stochastically complete (see \cite{XH11} and \cite[Theorem 4]{KL12} for a general result in the discrete setting). As a consequence, the $ L^1$-Liouville property is not affected by a finite perturbation of the graph and, just as in the continuous setting, a graph is $ L^1$-Liouville provided that at least one of its ends is Dirichlet $ L^1$-Liouville (Theorem \ref{end_thm}).

The paper is organized as follows: in Section \ref{sec:set up} we introduce some basic notations and definitions of graph theory. In Section \ref{sec:L_1} we define the $L^1$-Liouville property, we describe its relationship with stochastic completeness and, after having proved that the two concepts are equivalent for model graphs, we exhibit two examples of stochastically incomplete graphs with the $ L^1$-Liouville property. In Section \ref{sec:comp} we prove two comparison results with model graphs. They are proved by transplanting to the graph the Green function, respectively, the mean exit time, of the model. In Section \ref{sec:DirL_1} we introduce the Dirichlet $L^1$-Liouville of a subgraph and study its relationships with the $ L^1$-Liouville property of the ambient graph.


\section{Set up and basic facts}\label{sec:set up}

A graph is a triple $ G = (V,b,m) $, where $ V $ is the set of nodes of the graph, $ b: V \times V \to [0,\infty) $ is a symmetric function vanishing on the diagonal and $ m : V \to (0,\infty) $ is a measure of full support. Consider two nodes $ x, y \in V $. When $ b(x,y) \neq 0 $ we say that $ x $ and $ y $ are neighbors and we write $ x \sim y $. A path is a (possibly infinite) sequence of neighbors and we say that a graph is connected if, for every $x,y \in V $, there exists a finite path connecting $ x $ and $ y $, that is $ \exists \, \ x=x_0 \sim x_1 \sim ... \sim x_{n-1} \sim x_{n} = y $ for some $ x_1,...,x_{n-1} \in V $.
In this case $n$ is the (combinatorial) length of the path and we define the (combinatorial) distance $d(x,y)$ between $x$ and $y$   to be the infimum of the length of paths connecting $x$ and $y$.

We say that a graph is locally finite if, for every $ x \in V $, $ |\{ y \, : \, b(x,y) \neq 0 \}| < \infty $, where $ | \cdot | $ denotes the cardinality of a set. In particular, local finiteness implies that, for every $ x \in V $,
\begin{equation*}
\frac 1{m(x)}\sum_{y \in V} b(x,y) < \infty.
\end{equation*}
The quantity in the above equation is called degree at $ x $ and  it is denoted by $ \text{Deg}(x) $.
From now on, we always assume graphs to be connected and locally finite.

We denote by $ C(V) $ the set of all functions $ f : V \to \mathbb{R} $. Under the assumption of local finiteness, it is immediate to see that the Laplacian $ \Delta : C(V) \to C(V) $ of the graph, defined by the formula
\begin{equation*}
\Delta f(x) = \frac{1}{m(x)} \sum_{y \in V} b(x,y) \left( f(x)-f(y) \right),
\end{equation*}
is well defined for every $ f \in C(V) $.

A subgraph $ N $ of $ G =(V,b,m) $ is a triple $ N= (W, b_{|W \times W}, m_{|W}) $, where $ W \subset V $. We denote by $ \text{int\,}N = \{ y \in W \, : \, \nexists \, x \in V \setminus W \text{ such that } b(x,y) \neq 0 \} $ the interior of $ N $ and by $ \partial N = N \setminus \text{int\,}N $ its (inner) boundary. Given a fixed vertex $ x_0 \in V $ and $ r \geq 0 $, we write $ B_r(x_0) = B_r = \{ y \in V \, : \, d(x_0,y) \leq r \} $ and $ S_r(x_0) = S_r =\{ y \in V \, : \, d(x_0,y) = r\}$.

The inner and outer curvatures $k_{\pm}(x) $ of a graph at $ x \in S_r(x_0) $ are defined by
\begin{equation*}
k_{\pm}(x) = \frac{1}{m(x)} \sum_{y \in S_{r \pm 1}} b(x,y).
\end{equation*}
When $ k_{\pm} $ are spherically symmetric functions, i.e. when $ k_{\pm}(x) = k_{\pm} (x') $ for every $ x,x' \in S_r(x_0) $, for every $ r \geq 0 $, we say that the graph is weakly spherically symmetric, or that it is  a model with root $ x_0 $. Following \cite{KLW}, we set
\begin{equation*}
\partial B(r) = \sum_{x \in S_r} \sum_{y \in S_{r+1}} b(x,y).
\end{equation*}
The heat kernel $ p_t(x,y) $ is the minimal positive fundamental solution of the continuous time heat equation on $G$, namely,
\[
\begin{cases}
(\partial_t+\Delta_y)\, p_t(x,y)=0 & \forall x,y \in V\, \text{ and }\, t>0\\
p_0(x,y)= \frac{\delta_x(y)}{m(x)},\\
\end{cases}
\]
and it can be obtained via exhaustion of the graph (see, for example, \cite{RKW1}). In particular, the heat kernel is the monotone limit of the Dirichlet heat kernels, which we denote by $ p_t^r(x,y) $, on the exhaustion $ \{ B_r \}_{r \geq 0} $, which satisfy the initial value problem in  $\mathrm{int\,} B_r$ and Dirichlet boundary conditions, i.e., $p^r_t(x,y)=0$ for all $t>0$ if $x$ or $y$ are in $ \partial B_r$.
It is well-known that
\begin{equation*}
\sum_{y \in V}  p_t(x,y) m(y) \leq 1
\end{equation*}
and we say that the graph (or, more precisely, the Browninan motion driven by the Laplace operator) is stochastically complete if, for every $ x \in V $ and every $ t > 0 $,
\begin{equation*}
\sum_{y \in V}  p_t(x,y) m(y) = 1.
\end{equation*}

Recall also that the graph  is said to be non-parabolic (or transient) if it admits a Green kernel, that is, a minimal positive fundamental solution to the Poisson equation
\[
\Delta_y g(x,y)= \frac{\delta_x(y)}{m(x)}.
\]
The Green kernel can be obtained via time integration of $ p_t(x,y) $ as follows:
\begin{equation*}
g(x,y) = \int_{0}^{\infty} p_t(x,y) dt,
\end{equation*}
so that transience is equivalent to the finiteness of the integral on the right hand side. Similarly, the Dirichlet Green kernels are given by
\begin{equation*}
g_r(x,y) = \int_{0}^{\infty} p_t^r(x,y) dt,
\end{equation*}
and the Green kernel of $G$ is then the monotone limit of the Dirichlet Green kernels of an exhaustion.


\section{The $ L^1$-Liouville property}\label{sec:L_1}

In this section we first define the $ L^1 $-Liouville property, then give some characterization and preliminary results about it in the context of general graphs.

\begin{defn}
We say that a graph $ G=(V,b,m) $ satisfies the $ L^1 $-Liouville property, or, shortly, that it is $ L^1 $-Liouville, if every non-negative super-harmonic function in $ L^1(V,m) $ is constant, that is, every non-negative function $ u \in L^1(V,m) $ such that
$$
\Delta u \geq 0
$$
is constant.
\end{defn}

Recalling that parabolicity is equivalent to the fact that every non-negative super-harmonic function is constant, all parabolic graphs are trivially $ L^1$-Liouville, so we can concentrate our study to non-parabolic ones.
We start with the following characterization of the $ L^1 $-Liouville property in terms of the (non)integrability of the Green function $ g $ of the graph. This is the analogue of a result in \cite{Gri} and is the content of Theorem 1.7 in \cite{HK}. We provide a slightly different proof based on the strong minimum principle according to which if a non-constant function $ u $ satisfies
\begin{equation*}
\begin{cases}
\Delta u \geq 0 & \text{ on int\,} D  \\
u\geq 0 & \text{ on } \partial D
\end{cases}
\end{equation*}
on a finite subgraph $ D $, then $ u > 0 $ on int\,$D$ (see, for example, \cite{D84,Gri18,Web10}).

\begin{thm}\label{non-int}
A graph $ G=(V,b,m) $ is $ L^1 $-Liouville if and only if, for some (any) $ x \in V $, $ g(x, \cdot) \notin L^1(V,m) $, that is,
\begin{equation*}
\sum_{y \in V} g(x,y) m(y) = \infty.
\end{equation*}
\end{thm}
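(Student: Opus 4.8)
The plan is to prove both implications through the Green kernel.

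First, I would establish the easier direction: if $g(x,\cdot) \in L^1(V,m)$ for some $x$, then $G$ is \emph{not} $L^1$-Liouville. The function $u = g(x,\cdot)$ is non-negative and, by the defining property of the Green kernel, satisfies $\Delta u = \delta_x/m(x) \geq 0$, so it is super-harmonic. It is non-constant (since $\Delta u \not\equiv 0$, or equivalently since it tends to $0$ along an exhaustion while $g(x,x) > 0$). By hypothesis it lies in $L^1(V,m)$, so it is a non-constant, non-negative $L^1$ super-harmonic function, contradicting the $L^1$-Liouville property. Since the paper remarks that the choice of $x$ does not matter, I would note that finiteness of $\sum_y g(x,y)m(y)$ for one $x$ forces it for all $x$ (e.g.\ via the symmetry $g(x,y)m(y) = g(y,x)m(x)$ and a path/Harnack-type comparison, or simply because both cases correspond to the same intrinsic transience-type dichotomy; if needed this can be folded into the other direction).

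For the converse, assume $g(x,\cdot) \notin L^1(V,m)$ for every $x$, and let $u \geq 0$ be super-harmonic with $u \in L^1(V,m)$; I want to show $u$ is constant. The idea, following Grigor'yan's continuous argument, is to fix an exhaustion $\{B_r\}$ and compare $u$ with a Green-potential representation on each $B_r$. On $B_r$ write the Riesz-type decomposition: since $\Delta u \geq 0$ on $\mathrm{int\,}B_r$, the function
\begin{equation*}
v_r(y) = \sum_{z \in \mathrm{int\,}B_r} g_r(y,z)\,\Delta u(z)\, m(z)
\end{equation*}
is the non-negative solution of the Dirichlet problem $\Delta v_r = \Delta u$ on $\mathrm{int\,}B_r$, $v_r = 0$ on $\partial B_r$, so that $h_r := u - v_r$ is harmonic on $\mathrm{int\,}B_r$ with boundary values $u|_{\partial B_r} \geq 0$; by the minimum principle $h_r \geq 0$, hence $0 \leq v_r \leq u$ on $B_r$. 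Now multiply by $m$ and sum over $y \in B_r$, then use the symmetry $g_r(y,z)m(y) = g_r(z,y)m(z)$ and Tonelli to get
\begin{equation*}
\sum_{z \in \mathrm{int\,}B_r} \Big(\sum_{y \in B_r} g_r(z,y)\,m(y)\Big) \Delta u(z)\, m(z) = \sum_{y\in B_r} v_r(y)\, m(y) \leq \sum_{y\in V} u(y)\,m(y) =: \|u\|_1 < \infty.
\end{equation*}
Letting $r \to \infty$, the inner sum $\sum_{y} g_r(z,y)m(y)$ increases to $\sum_y g(z,y)m(y) = \infty$ for each fixed $z$ with $\Delta u(z) > 0$; by monotone convergence the left-hand side would diverge unless $\Delta u(z) = 0$ for all such $z$. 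Hence $\Delta u \equiv 0$, i.e.\ $u$ is a non-negative \emph{harmonic} $L^1$ function. Finally, a non-negative harmonic function is automatically super-harmonic, and $v_r \equiv 0$ gives $u = h_r \geq 0$ harmonic on every $B_r$; but a non-negative harmonic function on a connected graph that is non-constant is ruled out here because $u \in L^1$ forces $\liminf u = 0$ along the exhaustion, and then the minimum principle (applied to $u$ on $B_r$ with the boundary infimum $\to 0$) forces $u \to 0$, hence $u \equiv 0$, in particular constant. (Alternatively: an $L^1$ non-negative harmonic function must be a constant multiple of something with finite mass on an infinite graph, forcing it to be $0$.)

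I expect the main obstacle to be the careful justification of the Riesz decomposition $0 \le v_r \le u$ on each $B_r$ and the interchange of summation — in particular checking that $g_r$ is symmetric with respect to $m$ and that all the sums are of non-negative terms so Tonelli applies cleanly, which circumvents the lack of a chain rule that plagues other arguments in this setting. The passage $r\to\infty$ is then a routine monotone convergence argument once the uniform bound by $\|u\|_1$ is in place, and the final step, upgrading "$u$ non-negative $L^1$ harmonic" to "$u$ constant", uses only the minimum principle on the exhaustion together with $u \in L^1$ forcing the boundary infima to vanish.
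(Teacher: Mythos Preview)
Your Riesz-decomposition step is correct and does reduce the problem to showing that any non-negative \emph{harmonic} $u\in L^1$ is constant. The gap is in that last step. The claim ``$u\in L^1$ forces $\liminf u=0$ along the exhaustion'' is false for general weighted graphs: the paper allows arbitrary measures $m$, and if $m$ decays fast enough the set $\{u\ge\varepsilon\}$ can be infinite for every $\varepsilon>0$ even though $\sum u\,m<\infty$. Even granting $\min_{\partial B_r}u\to 0$, the minimum principle for a harmonic $u$ on $B_r$ yields $u(x)\ge \min_{\partial B_r}u$, which only recovers $u\ge 0$; it does not force $u\to 0$. Your parenthetical alternative is not an argument either. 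So as written, the harmonic case is unproven.

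The paper's proof bypasses this entirely, and in doing so shows that your Riesz step, while correct, is redundant. It argues directly for any non-constant non-negative super-harmonic $u$: by the strong minimum principle $u>0$ everywhere, pick $C$ with $Cu(x_0)\ge g(x_0,x_0)$, and apply the maximum principle to $Cu-g_r(x_0,\cdot)$ on $B_r\setminus\{x_0\}$ (super-harmonic in the interior, non-negative on $\partial B_r\cup\{x_0\}$) to get $Cu\ge g_r(x_0,\cdot)$, hence $Cu\ge g(x_0,\cdot)$ and $u\notin L^1$. This comparison works verbatim for your leftover harmonic case, so the cleanest fix is to replace your final paragraph by exactly this argument --- at which point the Riesz decomposition becomes an interesting but unnecessary detour.
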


\begin{proof}
$ \Rightarrow $ This implication is obvious: $ g $ is a super-harmonic, non-negative function. Since $ g $ is non-constant and the graph is $ L^1 $-Liouville, it follows that $ g(x, \cdot) \notin L^1(V,m) $ for every $ x \in V $.

$ \Leftarrow $ Let $ u : V \to \mathbb{R} $ be a non-negative, non-constant, super-harmonic function and let $ x_0 \in V $ be a fixed vertex. We need to show that $ g(x_0, \cdot) \notin L^1(V,m) $ implies $ u \notin L^1(V,m) $. There exists $ C > 1 $ such that $g(x_0,x_0) \leq C u(x_0) $. Note that the function $ v(x) := C u(x)-g_r(x_0,x)  $, where $ g_r $ is defined as above, satisfies the following system:
\begin{equation*}
\begin{cases}
\Delta v(x) \geq 0 & \text{ for all } x \in \mathrm{int}\, B_r(x_0) \setminus \{ x_0 \} \\
v(x) \geq 0 & \text{ for all } x \in \partial B_r(x_0) \cup \{ x_0 \}.
\end{cases}
\end{equation*}
It follows by the maximum principle that $ v \geq 0 $ on $ B_r(x_0) $ for every $ r \geq 1 $.
By passing to the limit as $ r \to \infty $ and using the fact that $ C > 1 $, we obtain $ C u \geq g(x_0, \cdot) $, so that $ u \notin L^1(V,m) $ by the assumed non-integrability of $ g(x_0,\cdot) $.
\end{proof}

Recalling the connection between the Green function $ g $ of a graph and its heat kernel, we obtain the following immediate corollary (see \cite[Theorem 1.7]{HK}).

\begin{cor}\label{stoc_impl_L1}
Every stochastically complete graph $ G=(V,b,m) $ satisfies the $ L^1 $-Liouville property.
\end{cor}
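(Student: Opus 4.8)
The plan is to combine the characterization in Theorem~\ref{non-int} with the representation of the Green kernel as the time integral of the heat kernel. First I would dispose of the parabolic case: if $G$ is parabolic, then by definition every non-negative super-harmonic function is constant, so $G$ is trivially $L^1$-Liouville and there is nothing to prove. Hence we may assume $G$ is non-parabolic, so that the Green kernel $g(x,y)=\int_0^\infty p_t(x,y)\,dt$ is finite for all $x,y\in V$ and Theorem~\ref{non-int} applies.

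Next, fix $x\in V$ and evaluate $\sum_{y\in V} g(x,y)\,m(y)$. Since $p_t(x,y)\ge 0$, Tonelli's theorem permits interchanging the sum over $y$ with the integral over $t$:
\[
\sum_{y\in V} g(x,y)\,m(y)=\sum_{y\in V}\Big(\int_0^\infty p_t(x,y)\,dt\Big)m(y)=\int_0^\infty\Big(\sum_{y\in V}p_t(x,y)\,m(y)\Big)\,dt.
\]
By the assumed stochastic completeness the inner sum equals $1$ for every $t>0$, so the right-hand side is $\int_0^\infty 1\,dt=+\infty$. Therefore $g(x,\cdot)\notin L^1(V,m)$, and Theorem~\ref{non-int} immediately gives the $L^1$-Liouville property.

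There is no substantial obstacle here: the only points requiring (minor) care are the legitimacy of exchanging summation and integration, which is guaranteed by the non-negativity of the heat kernel via Tonelli's theorem, and the reduction to the non-parabolic case, which is immediate from the remark preceding Theorem~\ref{non-int}. One could even avoid the case split altogether by noting that the displayed identity holds in $[0,+\infty]$ irrespective of parabolicity — with $g(x,\cdot)\equiv+\infty$ when $G$ is parabolic — and still conclude through Theorem~\ref{non-int}.
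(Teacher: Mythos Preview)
Your argument is correct and matches the paper's proof essentially line for line: apply Tonelli to swap the sum over $y$ with the time integral of the heat kernel, use stochastic completeness to evaluate the inner sum as $1$, and conclude via Theorem~\ref{non-int}. The only difference is your preliminary reduction to the non-parabolic case, which---as you yourself note---is unnecessary since the displayed identity and Theorem~\ref{non-int} already cover that situation.
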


\begin{proof}
Indeed, by applying Tonelli's Theorem,
\begin{equation*}
\begin{split}
\sum_{y \in V} g(x_0,y) m(y) & = \sum_{y \in V} \int_{0}^{\infty} p_t(x_0,y) dt \, m(y) \\
& = \int_{0}^{\infty} \sum_{y \in V} p_t(x_0,y) m(y) \, dt \\
& = \int_{0}^{\infty} 1 dt = \infty
\end{split}
\end{equation*}
and the graph is $ L^1 $-Liouville by Theorem \ref{non-int}.
\end{proof}

\subsection{The $ L^1 $-Liouville property and stochastic completeness of model graphs}\label{ssec:L_1}

In this subsection we want to show that for a model graph the two properties of stochastic completeness and $ L^1 $-Liouville actually coincide, which represents the analogue of a result in \cite{BPS}. It is well known, see \cite[Theorem 1]{KLW}, that if a graph is a model then the heat kernel is a spherically symmetric function, meaning that $ p_t(x_0,x) = p_t(r) $ for every $ x \in S_r(x_0) $ and every $ r \in \mathbb{N}_0 $. This implies that, on model graphs, the function $ g(x_0,\cdot) $ is a spherically symmetric function.

Indeed, a straightforward computation shows that the Green function of a model graph centered at the root $x_0$ is given by
\begin{equation}\label{Green model}
g(x_0,x) = g(r) = \sum_{k=r}^{\infty} \frac{1}{\partial B(k)},
\end{equation}
for every $ x \in S_r(x_0) $ and $ r \geq 0 $.
Moreover, when $ g \equiv \infty $, i.e. when the graph is parabolic, or recurrent, then it is trivially stochastically complete and, therefore, satisfies the $ L^1 $-Liouville property. We are then interested in the case where $ G $ is a model graph and $ g \neq \infty $, that is, $ \sum_k \frac{1}{\partial B(k)} < \infty $.

We also recall the following result (see \cite[Theorem 5.10]{XH11} for the unweighted case and \cite[Theorem 5]{KLW} and \cite[Theorem 4.8]{RKW11} for the general case):

\begin{thm}\label{stochcompletenessmodels}
A model graph ${G}$ is stochastically complete if and only if the series
\begin{equation*}\label{sum}
\sum_{k=0}^{\infty} \frac{m(B_k)}{\partial B(k)}
\end{equation*}
diverges.
\end{thm}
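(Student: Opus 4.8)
The plan is to reduce stochastic completeness to the (non)existence of a bounded $\lambda$-(super)harmonic function, and then to solve the resulting radial difference equation explicitly on a model. I would start from the standard criterion (see, e.g., \cite{KLW,RKW11}): $G$ fails to be stochastically complete if and only if there exist $\lambda>0$ and a bounded function $v\colon V\to[0,\infty)$ with $v\not\equiv 0$ and $(\Delta+\lambda)v=0$. On a model graph with root $x_0$ this $v$ may be taken spherically symmetric: averaging over the spheres $S_r(x_0)$ (with weights $m(x)$) sends solutions of $(\Delta+\lambda)v=0$ to solutions, and the average of a non-negative, non-trivial, bounded solution is again non-negative, non-trivial and bounded (note $v(x_0)>0$, else $v\equiv0$ by connectedness since $v\ge0$). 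So it suffices to analyse spherically symmetric solutions $v=v(r)$, normalized by $v(0)=1$.

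Next I would write down the radial equation. For spherically symmetric $v$ the edges inside a sphere cancel in $\Delta v$, so at $x\in S_r$ only the edges to $S_{r\pm1}$ contribute and $(\Delta+\lambda)v=0$ becomes
\[
k_+(r)\bigl(v(r)-v(r+1)\bigr)+k_-(r)\bigl(v(r)-v(r-1)\bigr)=-\lambda v(r)\qquad(r\ge1),
\]
together with $k_+(0)\bigl(v(0)-v(1)\bigr)=-\lambda v(0)$ at the root; in particular the whole sequence is determined by $v(0)$. Using the identities $k_+(r)\,m(S_r)=\partial B(r)$ and $k_-(r)\,m(S_r)=\partial B(r-1)$ (which follow by summing the definition of $k_\pm$ over $S_r$ and using symmetry of $b$), I would multiply the $r$-th equation by $m(S_r)$ and telescope in terms of the discrete flux $F(r):=\partial B(r)\bigl(v(r+1)-v(r)\bigr)$, obtaining $F(r)=\lambda\sum_{k=0}^{r}m(S_k)\,v(k)$. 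This shows $v$ is positive and strictly increasing and yields the closed form
\[
v(n)=1+\lambda\sum_{r=0}^{n-1}\frac{1}{\partial B(r)}\sum_{k=0}^{r}m(S_k)\,v(k).
\]

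With this formula the two implications are short. If $\sum_k m(B_k)/\partial B(k)=\infty$, then using $v(k)\ge1$ we get $v(n)\ge 1+\lambda\sum_{r<n}m(B_r)/\partial B(r)\to\infty$, so no bounded non-trivial solution exists and $G$ is stochastically complete. Conversely, if $S:=\sum_k m(B_k)/\partial B(k)<\infty$, then monotonicity of $v$ gives $\sum_{k\le r}m(S_k)v(k)\le m(B_r)v(r)$, hence $v(n)\le 1+\lambda\sum_{r<n}\frac{m(B_r)}{\partial B(r)}v(r)$, and a discrete Gronwall inequality yields $v(n)\le\exp(\lambda S)<\infty$; thus $v$ is a bounded, non-negative, non-trivial solution of $(\Delta+\lambda)v=0$ and $G$ is stochastically incomplete.

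The step requiring the most care is the first reduction: one must invoke (or reprove) the equivalence between stochastic completeness and triviality of bounded $\lambda$-harmonic functions in the correct form for the positive graph Laplacian, and justify the spherical-averaging step on model graphs, checking that it preserves the equation at the root as well as boundedness and non-triviality. Once that is granted, the radial computation — pinning down the boundary term at $x_0$ and the relations between $k_\pm$ and $\partial B$ — is routine, and the final Gronwall estimate is elementary.
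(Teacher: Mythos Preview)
Your argument is correct and essentially reproduces the proof in the cited references \cite{KLW,RKW11}. Note, however, that the paper itself does \emph{not} prove this theorem: it is stated as a known result and attributed to \cite[Theorem~5.10]{XH11} (unweighted case) and \cite[Theorem~5]{KLW}, \cite[Theorem~4.8]{RKW11} (general case), so there is no ``paper's own proof'' to compare against. Your sketch---reduction to the Khas'minskii-type criterion, spherical averaging to radialise, the flux recursion $F(r)=\lambda\sum_{k\le r}m(S_k)v(k)$, and the two-sided estimate via monotonicity and discrete Gronwall---is exactly the standard route taken in those sources.
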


With this preparation we have the following theorem.

\begin{thm}
Let $ G=(V,b,m) $ be a model graph. Then $ G $ is $ L^1 $-Liouville if and only if it is stochastically complete.
\end{thm}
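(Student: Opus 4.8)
The plan is to combine the explicit formula \eqref{Green model} for the Green function of a model with the characterization of the $L^1$-Liouville property in Theorem \ref{non-int} and the characterization of stochastic completeness of models in Theorem \ref{stochcompletenessmodels}; in fact a single computation will yield both implications at once (one of which is of course already contained in Corollary \ref{stoc_impl_L1}).

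First I would dispose of the parabolic case: if $\sum_k 1/\partial B(k)=\infty$ then $G$ is recurrent, hence both stochastically complete and (trivially) $L^1$-Liouville, and there is nothing to prove. So I may assume $\sum_k 1/\partial B(k)<\infty$. I would also note that connectedness and local finiteness force $\partial B(k)>0$ for every $k$: if $\partial B(k)=0$ for some $k$ then $V=B_k$ is finite and $G$ is parabolic, contrary to assumption. Hence \eqref{Green model} is meaningful and all the series below are well defined.

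The core step is then the computation of $\sum_{y\in V}g(x_0,y)\,m(y)$. Since $G$ is a model, $g(x_0,\cdot)$ is spherically symmetric, so, grouping vertices by spheres and writing $m(S_r)=\sum_{x\in S_r}m(x)$, one obtains from \eqref{Green model}
\begin{align*}
\sum_{y\in V} g(x_0,y)\,m(y) &=\sum_{r=0}^{\infty} m(S_r)\,g(r)=\sum_{r=0}^{\infty} m(S_r)\sum_{k=r}^{\infty}\frac{1}{\partial B(k)} \\
&=\sum_{k=0}^{\infty}\frac{1}{\partial B(k)}\sum_{r=0}^{k}m(S_r)=\sum_{k=0}^{\infty}\frac{m(B_k)}{\partial B(k)},
\end{align*}
where the interchange of the order of summation is legitimate by Tonelli's theorem, all summands being nonnegative, and the last equality uses $B_k=\bigcup_{r=0}^k S_r$. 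By Theorem \ref{non-int}, $G$ is $L^1$-Liouville precisely when the left-hand side diverges, i.e. precisely when $\sum_k m(B_k)/\partial B(k)=\infty$; by Theorem \ref{stochcompletenessmodels} this is exactly the condition for $G$ to be stochastically complete, and the equivalence follows.

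I do not anticipate a serious obstacle: once the preliminary reductions are made, the whole argument is the one-line rearrangement of a double series of nonnegative terms together with the two characterization theorems. The only points requiring a little care are the reduction to the non-parabolic case — which is what guarantees $\partial B(k)>0$ and hence the validity of \eqref{Green model} — and the (routine) appeal to Tonelli's theorem to swap the two sums.
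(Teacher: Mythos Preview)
Your proposal is correct and follows essentially the same route as the paper: reduce to the non-parabolic case, use the explicit formula \eqref{Green model} for $g(r)$, and rearrange the double series $\sum_r g(r)m(S_r)$ to obtain $\sum_k m(B_k)/\partial B(k)$, then invoke Theorems \ref{non-int} and \ref{stochcompletenessmodels}. You are slightly more explicit about the justification (Tonelli, positivity of $\partial B(k)$), but the argument is the same.
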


\begin{proof}
We may assume that $ G $ is not parabolic and therefore that $ \sum_k \frac{1}{\partial B(k)} < \infty $.

Changing the order of summation and using Equation \eqref{Green model}, we see that
\begin{equation*}
\sum_{r=0}^{\infty}g(r)m(S_r) = \sum_{r=0}^{\infty} \sum_{k=r}^{\infty} \frac{m(S_r)}{\partial B(k)} = \sum_{k=0}^{\infty} \sum_{r=0}^{k} \frac{m(S_r)}{\partial B(k)}= \sum_{k=0}^{\infty} \frac{m(B_k)}{\partial B(k)},
\end{equation*}
so that the condition for the $ L^1 $-Liouville property to hold coincides with the one for the stochastic completeness. This completes the proof.
\end{proof}

At this point, one could wonder whether stochastically incomplete graphs satisfying the $ L^1 $-Liouville property exist, or if a converse of Corollary \ref{stoc_impl_L1} is true. We give two examples to show that in fact there exist graphs which are stochastically incomplete, but satisfy the $ L^1 $-Liouville property. The first one is an analogue of an example proposed in \cite{BPS} and is similar to other examples in \cite{HK,XH11}, while the second is based on the notion of antitree, which we will briefly introduce after the first example.

\begin{exmp}\label{exmp_1}
We start by considering two graphs $ M_1=(V_1,b_1,m_1) $ and $ M_2=(V_2,b_2,m_2) $.

We make the following assumptions on $ M_1 $ and $ M_2 $:

\begin{enumerate}
\item $ m_1(M_1) = \sum_{x \in V_1} m_1(x) = \infty $, that is, $ M_1 $ has infinite volume.
\item $ M_2 $ is stochastically incomplete, for instance, $M_2$ is a model graph with $ \sum_{k=0}^{\infty} \frac{m_2(B_k)}{\partial B(k)} < \infty $.
\end{enumerate}

We define a new graph $ M =(V,b,m) $ by gluing the two graphs $ M_1 $ and $ M_2 $ at a single vertex, that is $ V:= V_1 \cup V_2 $, $ b_{|\left( V_i \times V_i \right)} := b_i $, $ m_{|V_i} := m_i $ and there exist two and only two vertices $ x_1 \in V_1 $ and $ x_2 \in V_2 $ such that $ b(x_1,x_2) >0 $. Since $ M_2 $ is stochastically incomplete, it turns out that $ M $ is stochastically incomplete as well (see \cite{XH11,KL12}) and, therefore, it is not parabolic. We want to show that it is possible to define a conformal change of the measure so that stochastic incompleteness is preserved and $ M $ becomes $ L^1 $-Liouville.

Let $ \tilde{m}(y) = \lambda^{2}(y)m(y) $ for all $ y \in V $, where $ \lambda(y)=1 $ if $ y \in V_2 $ and
$\lambda^{2}(y)g(x,y) \geq 1 $ for $ y \in V_1 $, where $ g $ is the Green function of the graph $ M=(V,b,m) $. We claim that this conformal change of the measure preserves the Green function, that is $ \tilde{g} = g $, where $ \tilde{g} $ is the Green function of the graph $\tilde{M}=(V, b,\tilde{m}) $. Indeed, using the trivial fact that the Laplacian on $ \tilde{M} $ is $ \tilde{\Delta} = \frac{1}{\lambda^{2}} \Delta $ and the property of the Green function,
\begin{equation*}
\begin{split}
f(x) & = \sum_{y \in V} \Delta_y g(x,y)f(y)m(y) \\
& = \sum_{y \in V} \frac{\Delta_y g(x,y)}{\lambda^{2}(y)}f(y)\lambda^{2}(y)m(y) \\
& = \sum_{y \in V} \tilde{\Delta}_y g(x,y) f(y) \tilde{m}(y),
\end{split}
\end{equation*}
proving our claim. Note that, since $ \lambda^{2}=1 $ on $ V_2 $, $ \tilde{\Delta}= \Delta^{M_2} $ on $V_2\setminus \{x_2\}, $
and it follows easily that $ \tilde{M} $ is stochastically incomplete by the weak Omori-Yau maximum principle, \cite[Theorem 2.2]{XH11}.

We conclude our example by showing that $ \tilde{M} $ is $ L^1 $-Liouville. Indeed,
\begin{equation*}
\sum_{y \in V} \tilde{g}(x,y) \tilde{m}(y)= \sum_{y \in V} g(x,y) \lambda^{2}(y)m(y) \geq \sum_{y \in V_1} m(y) = \infty
\end{equation*}
and $ \tilde{M} $ is $ L^1 $-Liouville by Theorem \ref{non-int}.
\end{exmp}

Antitrees have been used to construct  various examples in \cite{CLMP,DK88,KLW,Web10,RKW11}. We use this notion to produce a graph which is stochastically incomplete, $ L^1 $-Liouville and has only one end, i.e., the complement of any  finite subset has only one unbounded component. Note that the graph constructed in the previous example has at least two ends, corresponding to the two different graphs glued together.

\begin{defn}
A graph $  G=(V,b,m) $ is called antitree if $ m \equiv 1 $, $ b(x,y) \in \{ 0,1\} $ and $ k_{+}(r)=|S_{r+1}| $ for all $ r \in \mathbb{N}_0 $. This last requirement is equivalent to saying that every vertex in $ S_r $ is connected to every vertex in $ S_{r+1} $.
\end{defn}

\begin{remark}
For our purposes we also require that there are no internal connections on each sphere. Such spherical connections would play a crucial role in the calculation of some notions of curvature such as the Ollivier curvature, see, for instance, \cite{CLMP}, where the authors require that every vertex is connected with every other vertex in the same sphere.
\end{remark}

\begin{exmp}\label{exmp_2}
Let $ A=(V,b,m) $ be an antitree with root $ x_0 $ and no spherical connections. We assume $ A $ to be stochastically incomplete. Since $ A $ is a model,  using the definition of an antitree, this amounts to the fact that
\begin{equation*}
\sum_{k=0}^{\infty} \frac{m(B_k)}{\partial B(k)}= \sum_{k=0}^{\infty} \frac{|B_k|}{|S_k||S_{k+1}|} < \infty.
\end{equation*}
Let $ \gamma $ be an infinite path $ \{x_i\}_{i=0}^{\infty} $ in $ A $, where $ x_i \in S_i $ for all $ i $.
We now apply a conformal change of the measure to guarantee the $ L^1 $-Liouville property, while preserving stochastic incompleteness.  Let $ \lambda:V \to \mathbb{R} $ such that $ \lambda \geq 1 $, $ \lambda(x)=1 $ for all $ x \notin \gamma $ and $ \lambda(x) \geq g(x_0,x)^{-\frac{1}{2}} $ for all $ x \in \gamma $. We now consider the graph $ \tilde{A}=(\tilde{V},\tilde{b},\tilde{m}) $, where $ \tilde{V}=V $, $ \tilde{b}=b $ and $ \tilde{m}=\lambda^2 m $.

As in the above example $ \tilde{g}(x_0,x) = g(x_0,x) $, and it is easily verified that $ \tilde{A} $ has the $ L^1 $-Liouville property. Indeed,
\begin{equation*}
\sum_{y \in \tilde{V}} \tilde{g}(x_0,y) \tilde{m}(y) \geq \sum_{y \in \gamma} g(x_0,y) \lambda^2(y) m(y) \geq \sum_{y \in \gamma} m(y) = \infty
\end{equation*}
and the graph is $ L^1 $-Liouville.

To conclude we need to show that the graph is stochastically incomplete or, equivalently, that it does not satisfy the weak Omori-Yau maximum principle (see \cite[Theorem 2.2]{XH11}).

Letting $ r(x) = d(x_0,x) $ and $ a_l = \frac{|B_l|}{|S_l||S_{l+1}|} $, we define $f^*=\sum_{l=0}^{\infty} a_l<+\infty$ and
\begin{equation*}
f(x)= \begin{cases}
\sum_{l=0}^{r(x)-1} a_l & \text{ } x \notin \gamma \\
f^*-\epsilon & \text{ for } x = x_i, \text{ for all } i \geq n-1 \\
0 & \text{ otherwise},
\end{cases}
\end{equation*}
with $ \epsilon $ and $ n $ to be determined. Then $f$ is bounded above with $\sup f=f^*$ and, if $ x \in S_r \setminus \gamma $, we have:
\begin{equation*}
\begin{split}
\tilde{\Delta}f(x) & = \sum_{y \in S_{r-1}} \left( f(x) - f(y) \right) + \sum_{y \in S_{r+1}} \left( f(x) - f(y) \right) \\
& = \left( |S_{r-1}|-1 \right) a_{r-1} + f(x)-f(x_{r-1}) + \left( 1-|S_{r+1}| \right) a_{r} + f(x) - f(x_{r+1}).
\end{split}
\end{equation*}
Note that
\begin{equation*}
\begin{split}
\left( |S_{r-1}|-1 \right) a_{r-1} + \left( 1-|S_{r+1}| \right) a_{r}
& =
\frac{|B_{r-1}|}{|S_{r}|} - a_{r-1}+ a_{r} - \frac{|B_{r}|}{|S_{r}|} \\
& = -1 + a_{r} - a_{r-1},
\end{split}
\end{equation*}
so that, inserting into the above expression,  we get
\[
\tilde{\Delta} f(x)  = -1 + a_{r} - a_{r-1} +  2 \sum_{l=0}^{r-1} a_l - 2 f^{*} + 2 \epsilon.
\]
Now we choose $ \epsilon > 0 $ and $ n > 1 $ such that
\begin{enumerate}
\item $ -1 + 3 \epsilon < 0 $,
\item  $ a_{r}-a_{r-1} < \epsilon $ for all $ r \geq n-1 $,
\item $ f^{*} - \sum_{l=0}^{n-2} a_l < \epsilon $,
\end{enumerate}
and let $ \alpha = \sum_{l=0}^{n-1} a_l $, so that \[\Omega_{\alpha}= \{x\,:\, f(x)>\alpha\}= B_{n-2}^{c}(x_0) \setminus \gamma.\]
 It follows that if $ x \in \Omega_{\alpha} $, then $r(x)\geq n-1$ and
\begin{equation*}
\begin{split}
\tilde{\Delta} f(x) & \leq -1 + a_{r} - a_{r-1} +  2 \epsilon
 \\
& < -1 + 3 \epsilon < 0,
\end{split}
\end{equation*}
showing that $f$ violates the weak maximum principle at infinity and therefore $\tilde{A}$ is stochasticastically  incomplete, as claimed.
\end{exmp}

\section{Comparison theorems}\label{sec:comp}

In this section we state and prove two comparison results with model graphs based on the notions of stronger/weaker curvature growth (see \cite{A,AS,KLW,RKW2}, where they are used to prove comparison theorems concerning volume growth, stochastic completeness and the Feller property). The first theorem is obtained by transplanting the Green function of the comparison model. The second uses the characterization of the $L^1$-Liouville property in terms of mean exit time and, again, a transplantation technique.

\begin{defn}
Let $ \tilde{G} =(\tilde{V},\tilde{b},\tilde{m}) $ be a model. We say that a graph $ G =(V,b,m) $ has stronger curvature growth than $ \tilde{G} $  outside   a finite set if there exists a vertex $ x_0 \in V $  and $R\geq 0$ such that
\begin{equation*}
k_+(x) \geq \tilde{k}_+(r) \text{ and } k_-(x) \leq \tilde{k}_-(r)
\end{equation*}
for all  $ x \in S_r(x_0) $ in $ G $ with $ r \geq R$.

We say that $ G $ has weaker curvature growth than $ \tilde{G} $ outside of a finite set if there exists $ x_0 \in V $ such that opposite inequalities hold.

In the case where $ R = 0 $ we simply say that $ G $ has stronger/weaker curvature growth than $ \tilde{G} $.
\end{defn}

\begin{thm}\label{comparison 2}
Let $ G=(V,b,m) $ be a weighted graph and let $ \tilde{G} = (\tilde{V},\tilde{b},\tilde{m}) $ be a model graph with root $ o $. Suppose that $ G $ has stronger curvature growth than $ \tilde{G} $ outside a finite set and $ m(S_r(x_0)) \leq C \tilde{m} (S_r(o)) $ for some $ C > 0 $ and all $ r $ large enough. If $ \tilde{G} $ is not $ L^1 $-Liouville, then $ G $ is not $ L^1 $-Liouville.
\end{thm}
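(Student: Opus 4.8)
The strategy is to reduce everything to the Green-function criterion of Theorem~\ref{non-int} and then to \emph{transplant} onto $G$ the radial Green function of the model $\tilde G$. Write $\tilde g(r)=\sum_{k\ge r}\partial B(k)^{-1}$ for the radialization of $\tilde g(o,\cdot)$, as in \eqref{Green model}. Since $\tilde G$ is not $L^1$-Liouville it is, a fortiori, non-parabolic (parabolic graphs are $L^1$-Liouville), so $\tilde g(r)$ is finite for every $r$ and strictly decreasing; moreover, applying Theorem~\ref{non-int} to $\tilde G$ and using the spherical symmetry of $\tilde g(o,\cdot)$ we get $\sum_{r\ge 0}\tilde g(r)\,\tilde m\bigl(S_r(o)\bigr)<\infty$ (by the change-of-summation identity of Section~\ref{ssec:L_1} this is the series $\sum_k \tilde m(B_k)/\partial B(k)$ of the stochastic-completeness criterion, Theorem~\ref{stochcompletenessmodels}). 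Fix once and for all $R_0\ge 1$ that is at least as large as the radius $R$ from the curvature hypothesis and large enough that $m\bigl(S_r(x_0)\bigr)\le C\,\tilde m\bigl(S_r(o)\bigr)$ for all $r\ge R_0$.

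First I would prove that $G$ is non-parabolic. For a radial function a direct computation gives, for $x\in S_r(x_0)$ with $r\ge1$,
\[
\Delta\bigl(\phi\circ d(x_0,\cdot)\bigr)(x)=\bigl(\phi(r)-\phi(r-1)\bigr)k_-(x)+\bigl(\phi(r)-\phi(r+1)\bigr)k_+(x).
\]
Taking $\phi=\tilde g$, and using that $\tilde g$ is decreasing (so the first bracket is negative and the second positive) together with $k_-(x)\le\tilde k_-(r)$ and $k_+(x)\ge\tilde k_+(r)$ for $r\ge R_0$, one gets that the transplant $h(x):=\tilde g\bigl(d(x_0,x)\bigr)$ satisfies $\Delta h(x)\ge\tilde\Delta\,\tilde g(r)=0$ whenever $d(x_0,x)\ge R_0$, since $\tilde g(o,\cdot)$ is harmonic on $\tilde V\setminus\{o\}$. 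To make superharmonicity global I would pass to the \emph{capped} transplant $\hat h(x):=\tilde g\bigl(\max\{d(x_0,x),R_0\}\bigr)$: the flattening on $B_{R_0}(x_0)$ annihilates the contribution of the inner neighbours, and a short case check---the only nontrivial sphere being $S_{R_0}(x_0)$, where both the inner and the outer neighbours of the cap carry the value $\tilde g(R_0)$---shows $\Delta\hat h\ge0$ on all of $V$. Thus $\hat h$ is a non-negative, non-constant (as $G$ is infinite and $\tilde g$ is strictly decreasing) superharmonic function on $G$, so $G$ is non-parabolic; in particular $g(x_0,\cdot)$ is finite, and $M_0:=\max_{B_{R_0}(x_0)}g(x_0,\cdot)<\infty$.

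Next I would compare $g$ with the transplant by a maximum-principle argument on annuli. Put $c:=M_0/\tilde g(R_0)$. For $\rho>R_0$ apply the strong minimum principle on the finite annulus $A_\rho:=\{x:R_0\le d(x_0,x)\le\rho\}$ to $v_\rho:=c\,h-g_\rho(x_0,\cdot)$, with $g_\rho$ the Dirichlet Green kernel of $B_\rho$: on $\operatorname{int}A_\rho$ one has $\Delta v_\rho=c\,\Delta h\ge0$ (there $d(x_0,\cdot)\ge R_0$, so $h$ is superharmonic, while $g_\rho(x_0,\cdot)$ is harmonic away from $x_0\in B_{R_0}$); and $\partial A_\rho\subseteq S_{R_0}(x_0)\cup\partial B_\rho$, where on $S_{R_0}(x_0)$ we have $g_\rho(x_0,\cdot)\le g(x_0,\cdot)\le M_0=c\,\tilde g(R_0)=c\,h$ and on $\partial B_\rho$ we have $g_\rho(x_0,\cdot)=0\le c\,h$, so $v_\rho\ge0$ on $\partial A_\rho$. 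Hence $v_\rho\ge0$ on $A_\rho$, and letting $\rho\to\infty$ yields $g(x_0,x)\le c\,\tilde g\bigl(d(x_0,x)\bigr)$ for all $x$ with $d(x_0,x)\ge R_0$. Splitting $V=B_{R_0-1}(x_0)\cup\{d(x_0,\cdot)\ge R_0\}$ and using this bound together with the volume comparison then gives
\[
\sum_{y\in V}g(x_0,y)\,m(y)\le M_0\,m\bigl(B_{R_0-1}(x_0)\bigr)+cC\sum_{r\ge0}\tilde g(r)\,\tilde m\bigl(S_r(o)\bigr)<\infty,
\]
so $g(x_0,\cdot)\in L^1(V,m)$ and $G$ is not $L^1$-Liouville by Theorem~\ref{non-int}.

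The radial Laplacian identity and the bookkeeping in the choice of $R_0$ are routine. The point that needs care---and the reason for the two-step structure---is that $g(x_0,\cdot)$ cannot be dominated by the transplant $h$ directly, since $h$ has no singularity at $x_0$ while $g(x_0,\cdot)$ does: one must first secure the non-parabolicity of $G$ (so that the finite constant $M_0$ is available) via the capped superharmonic function $\hat h$, and only then run the comparison on an annulus away from $x_0$ and---crucially---with the \emph{Dirichlet} kernels $g_\rho$, so that the outer boundary term vanishes. I expect this to be the only real obstacle.
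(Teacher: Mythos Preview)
Your argument is correct, and the central device---transplanting the radial Green function $\tilde g(r)$ onto $G$ and capping it to force global superharmonicity---is exactly the paper's. The difference is that you then take an unnecessary detour. The capped transplant $\hat h$ that you build in your first step is already a non-negative, non-constant, superharmonic function on all of $G$, and by the volume hypothesis it lies in $L^1(V,m)$:
\[
\sum_{x\in V}\hat h(x)\,m(x)\;\le\;\tilde g(R_0)\,m\bigl(B_{R_0-1}(x_0)\bigr)\;+\;C\sum_{r\ge R_0}\tilde g(r)\,\tilde m\bigl(S_r(o)\bigr)\;<\;\infty.
\]
This alone shows that $G$ is not $L^1$-Liouville, and the paper stops right here (its function $u=\min\{v,\tilde g(R+1)\}$ is your $\hat h$ with the cap placed one sphere further out). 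Your entire second step---first extracting non-parabolicity from $\hat h$, then running a maximum-principle comparison on annuli with the Dirichlet kernels $g_\rho$ to conclude $g(x_0,\cdot)\in L^1$---is valid but superfluous: the task is to exhibit \emph{some} non-constant non-negative superharmonic $L^1$ function, not to prove that the Green kernel itself is integrable. The ``two-step structure'' you flag as essential is therefore an artefact of aiming at $g(x_0,\cdot)$ rather than at $\hat h$.

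One small slip that does not affect the outcome: in your case-check at $S_{R_0}$ you say both inner and outer neighbours carry the value $\tilde g(R_0)$. The outer neighbours (in $S_{R_0+1}$) actually carry $\tilde g(R_0+1)$; but then $\Delta\hat h(x)=k_+(x)\bigl(\tilde g(R_0)-\tilde g(R_0+1)\bigr)\ge 0$ while the inner contribution vanishes, so the conclusion stands.
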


\begin{remark}
A result in \cite{AS} shows that if $ G $ has  stronger curvature  than a model graph $ \tilde{G} $ outside   a finite set then there exists a constant $ c > 0 $ such that $ m(S_r(x_0)) \geq c \tilde{m} (S_r(o)) $. The  assumption that $ m(S_r(x_0)) \leq C \tilde{m} (S_r(o)) $ amounts to asking that the volume growth of $G$ is controlled from below and from above by the volume growth of the comparison model graph. This is due to the fact, already stressed in the previous section, that $ L^1 $-Liouville property depends on the behaviour of the volume at infinity.
\end{remark}

\begin{proof}
Assume that $ \tilde{G} $ is not $ L^1 $-Liouville, so that by Theorem \ref{non-int}
\begin{equation*}
\sum_r \tilde{g}(r) \tilde{m} \left( S_r(o) \right) < \infty.
\end{equation*}
In order to negate the $ L^1 $-Liouville property, we need to find a function which is integrable, non-negative, super-harmonic and non-constant.
For  $ x \in S_r(x_0) $ define
\begin{equation*}
v(x)  := \tilde{g}(r),
\end{equation*}
and note that, by the explicit expression \eqref{Green model} of $\tilde{g}$, $ v $ is a decreasing radial function.
Assuming that $G$ has stronger curvature than $\tilde{G}$ if  $r(x)\geq R$ for some $ R > 0 $, for all such $x$ we have
\begin{equation*}
\begin{split}
\Delta v(x) & = k_+(x) (v(r) - v(r+1)) + k_-(x) (v(r) - v(r-1)) \\
& \geq \tilde{k}_+(r) \left( \tilde{g}(r) -  \tilde{g}(r+1) \right) + \tilde{k}_-(r) \left( \tilde{g}(r) -  \tilde{g}(r-1) \right) = \Delta \tilde{g}(r) \geq 0,
\end{split}
\end{equation*}
showing that  $v$ is non-negative and super-harmonic in $B_R(x_0)^c$. It follows that the function
$u=\min\{v(x), \tilde{g}(R+1)\}$  is  non-costant, non-negative, and super-harmonic on $G$.

Moreover, by increasing $ C $, we may assume that $ m(S_r(x_0)) \leq C \tilde{m}(S_r(o)) $ for every $ r \geq R $, so that
\begin{equation*}
\sum_{x \in V} u(x) m(x)\leq C+ \sum_{r\geq R} \tilde{g}(r) m\left( S_r(x_0) \right) \leq C \sum_r \tilde{g}(r) \tilde{m} \left( S_r (o) \right) < \infty,
\end{equation*}
as required to conclude that $ G $ is not $ L^1 $-Liouville.
\end{proof}

The following is a non-trivial application of the previous theorem with $ C = 1 $.

\begin{exmp}
We define $ \tilde{G}=(\mathbb{N}_0,\tilde{b},\tilde{m}) $ and $ G=(\mathbb{N}_0,b,m) $ by
\begin{equation*}
\tilde{m}(r) = \begin{cases}
1 & r = 0 \\
e^{-r}+2 & r \geq 1,
\end{cases}
\quad \tilde{b}(r,r+1)= (r+1)^3 \text{ for all } r \geq 0 \text{ and }
\end{equation*}
\begin{equation*}
m(r) = 2 \frac{e^r}{e^r+1},
\quad
b(r,r+1)= \frac{2 e^{r+1}}{e^{r+1}+1} \cdot \frac{(r+1)^3}{e^{-(r+1)}+2} \text{ for all } r \geq 0.
\end{equation*}
Straightforward computations give
\begin{equation*}
\tilde{k}_+(r) = \frac{(r+1)^3}{e^{-r}+2} \text{ for all } r \geq 1
\end{equation*}
and
\begin{equation*}
k_+(r) = \frac{(r+1)^3}{e^{-(r+1)}+2} \frac{e(e^r+1)}{e^{r+1}+1} \text{ for all } r \geq 0,
\end{equation*}
so that $ \tilde{k}_+ (r) \leq k_+(r) $ for all $ r \geq 1 $.
Moreover,
\begin{equation*}
\tilde{k}_-(r) = k_-(r) = \frac{r^3}{e^{-r}+2},
\end{equation*}
so that $ \tilde{G} $ has weaker curvature growth than $ G $ outside a finite set.

Note that, since
\begin{equation*}
\sum_{r=0}^{\infty} \frac{\sum_{i=0}^{r} e^{-i}}{(r+1)^3} < \infty
\end{equation*}
and
\begin{equation*}
\sum_{r=0}^{\infty} \frac{2(r+1)}{(r+1)^3} < \infty,
\end{equation*}
$ \tilde{G} $ is stochastically incomplete, hence, not $ L^1 $-Liouville.

Finally, using the fact that $ \tilde{m}(r) \geq m(r) $ for all $ r \geq 1 $ and Theorem \ref{comparison 2}, we conclude that $ G $ is not $ L^1 $-Liouville.

\end{exmp}

\subsection{Mean exit time and $ L^1 $-Liouville}\label{ssec:mean}

Let $E_r $ be the solution of the following problem:
\begin{equation*}
\begin{cases}
\Delta E_r = 1 & \text{ in } \mathrm{int}\,B_{r}(x_0) \\
E_r=0 & \text{ on } \partial B_r(x_0).
\end{cases}
\end{equation*}
Note that $ E_r $ either diverges at every point, or it converges to a function $ E $ satisfying $ \Delta E \equiv 1 $.
Indeed, by the maximum principle, $ E_r $ is positive and strictly increasing so that $ \lim_r E_r(x) = E(x) $ exists, finite or infinite. Assume that $ E(x_0) < \infty $ for some $ x_0 $. Since $ \Delta E_r(x_0) = 1 $, rearranging we get
\[
m(x_0) + \sum_{y} b(x_0,y) E_r(y) = E_r(x_0) \sum_{y} b(x_0,y)
\]
whence, letting $ r \to \infty $ and using the monotone convergence theorem, we deduce that the limit of the left hand side is finite, so in particular $ E(y)< \infty $ for all $ y \sim  x_0 $ and $ \Delta E(x_0)=1 $. A connectedness argument then shows that $ E(y) < \infty $ for every $ y $ and $ \Delta E = 1 $.

We also note that we have an explicit representation of $ E_r $ in terms of the Dirichlet Green's kernel $ g_r(x_0,x) $ of $B_r(x_0)$,
\begin{equation*}
E_r(x)= \sum_{y \in B_{r}(x_0)} g_r(x,y)m(y).
\end{equation*}
In particular
\begin{equation*}
E_r(x) \nearrow E(x) = \sum_{y \in V} g(x,y)m(y),
\end{equation*}
with $ g $ being the Green kernel of $ G $, so that the graph $ G $ is not $ L^1$-Liouville if and only if $ E(x) < \infty $ for some/every $ x \in G $.

Before stating and proving the main theorem of this subsection, we recall the following result, which was proved in \cite{KLW}.

\begin{thm}[Theorem 6 in \cite{KLW}]\label{comp_stoc}
If a weighted graph $ G=(V,b,m) $ has stronger (respectively, weaker) curvature growth than a weakly spherically symmetric graph $ \tilde{G} = (\tilde{V}, \tilde{b}, \tilde{m}) $ which is stochastically incomplete (respectively, complete), then $ G $ is stochastically incomplete (respectively, complete).
\end{thm}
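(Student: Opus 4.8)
The plan is to deduce both halves of the statement from a single transplantation argument centred on a distinguished radial function on $\tilde{G}$ whose Laplacian is constant. We may assume $G$ is infinite, a finite graph being trivially stochastically complete (and, when $R=0$, unable to have stronger curvature growth than an infinite model, since its outermost sphere has vanishing $k_+$). Let $o$ be the root of $\tilde{G}$, and set $\tilde{\gamma}(0):=0$ and $\tilde{\gamma}(r):=\sum_{k=0}^{r-1}\frac{\tilde{m}(B_k)}{\partial\tilde{B}(k)}$ for $r\ge 1$, all quantities referring to $\tilde{G}$. Using $\sum_{y\in S_{r\pm 1}}\tilde{b}(x,y)=\tilde{k}_\pm(r)\,\tilde{m}(x)$ for $x\in S_r(o)$ together with $\partial\tilde{B}(r)=\tilde{k}_+(r)\,\tilde{m}(S_r)=\tilde{k}_-(r+1)\,\tilde{m}(S_{r+1})$, a short computation — essentially the one behind Theorem \ref{stochcompletenessmodels} — gives $\tilde{\Delta}\tilde{\gamma}\equiv -1$ on all of $\tilde{V}$, the cross-terms collapsing to $\frac{\tilde{m}(B_{r-1})-\tilde{m}(B_r)}{\tilde{m}(S_r)}=-1$. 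Moreover $\tilde{\gamma}$ is nonnegative and strictly increasing, and by Theorem \ref{stochcompletenessmodels} it is \emph{bounded} precisely when $\tilde{G}$ is stochastically incomplete, and \emph{proper}, i.e.\ $\tilde{\gamma}(r)\to\infty$, precisely when $\tilde{G}$ is stochastically complete.

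Next I would transplant $\tilde{\gamma}$ radially to $G$: define $\gamma(x):=\tilde{\gamma}(d(x_0,x))$. For $x\in S_r(x_0)$ one has $\Delta\gamma(x)=k_+(x)\bigl(\tilde{\gamma}(r)-\tilde{\gamma}(r+1)\bigr)+k_-(x)\bigl(\tilde{\gamma}(r)-\tilde{\gamma}(r-1)\bigr)$, the first bracket being negative and the second positive since $\tilde{\gamma}$ is increasing. If $G$ has \emph{stronger} curvature growth than $\tilde{G}$, so $k_+(x)\ge\tilde{k}_+(r)$ and $k_-(x)\le\tilde{k}_-(r)$ for $x\in S_r(x_0)$ with $r\ge R$, then each summand is dominated by the corresponding summand with $\tilde{k}_\pm$ in place of $k_\pm$, and the sum of the latter equals $\tilde{\Delta}\tilde{\gamma}=-1$; hence $\Delta\gamma(x)\le -1$ for all $x$ with $d(x_0,x)\ge R$. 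If $G$ has \emph{weaker} curvature growth the inequalities reverse and $\Delta\gamma(x)\ge -1$ for all $x$ with $d(x_0,x)\ge R$. (For $R=0$ this holds at every vertex, the root included; in general the finitely many vertices of $B_R(x_0)$ cause no trouble, see below.)

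It then remains to feed $\gamma$ into the appropriate characterization in each case, both being available in references already used in the paper. If $\tilde{G}$ is stochastically incomplete, then $\tilde{\gamma}\le L:=\sup\tilde{\gamma}<\infty$, hence $\gamma\le L$ and $\sup_G\gamma=L$ (as $G$ is infinite); for $\alpha\in(\tilde{\gamma}(R),L)$ the super-level set $\{\gamma>\alpha\}=B_{r_\alpha}(x_0)^c$ consists of vertices $x$ with $d(x_0,x)\ge R$, so $\Delta\gamma\le -1<0$ there. Thus $\gamma$ violates the weak Omori--Yau maximum principle, and $G$ is stochastically incomplete by \cite[Theorem 2.2]{XH11}. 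If instead $\tilde{G}$ is stochastically complete, then $\tilde{\gamma}$, and hence $\gamma$, is proper (balls in $G$ being finite), and from $\Delta\gamma\ge -1$ on $B_R(x_0)^c$ we get $\Delta\gamma+\lambda\gamma\ge -1+\lambda\tilde{\gamma}(r)\ge 0$ as soon as $\tilde{\gamma}(r)\ge 1/\lambda$, i.e.\ outside a finite set; hence $\gamma$ is a nonnegative proper Lyapunov function and $G$ is stochastically complete by the Khas'minskii-type criterion for stochastic completeness on graphs (see \cite{KLW,KLW21,A}).

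The points that demand care are, first, bookkeeping the signs through the transplantation: the monotonicity of $\tilde{\gamma}$ and the two \emph{opposite} curvature inequalities must be combined so that precisely the wanted inequality on $\Delta\gamma$ survives, and $k_+$ and $k_-$ are not interchangeable here. Second, one must invoke the two characterizations in exactly the form used — the weak Omori--Yau maximum principle for stochastic incompleteness and the Khas'minskii Lyapunov criterion for completeness — and dispose of the finitely many vertices where the curvature comparison is not assumed (near the root, and in the finite core when $R>0$); this is handled either by restricting to $B_R(x_0)^c$ as above, or by recalling that stochastic completeness is unaffected by a finite perturbation (\cite{XH11,KL12}). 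The identity $\tilde{\Delta}\tilde{\gamma}\equiv -1$ is itself routine once the relations between $\partial\tilde{B}$, the $\tilde{k}_\pm$ and the sphere measures are written out.
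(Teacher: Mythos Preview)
The paper does not actually prove this theorem: it is quoted verbatim as Theorem~6 of \cite{KLW} and invoked as a black box in the proof of Theorem~\ref{comparison 1}. So there is no ``paper's own proof'' to compare your argument against.

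That said, your transplantation argument is correct and is in fact the natural companion to the paper's own proof of Theorem~\ref{comparison 1}(2): your function $\tilde\gamma(r)=\sum_{k=0}^{r-1}\tilde m(B_k)/\partial\tilde B(k)$ satisfies $\tilde\gamma(R)-\tilde\gamma(r)=F_R(r)$, where $F_R$ is precisely the radial function the paper transplants there, and your inequality $\Delta\gamma\le -1$ under stronger curvature growth is the sign-reversed version of the paper's $\Delta F_R(r(\cdot))\ge 1$. The only substantive difference is in how you close the argument: the paper (following \cite{KLW}) compares with the mean exit time via the maximum principle on finite balls and then passes to the limit, whereas you feed $\gamma$ directly into the weak Omori--Yau characterization \cite[Theorem~2.2]{XH11} on the incomplete side and into a Khas'minskii-type criterion on the complete side. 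Both routes are valid; yours is slightly more conceptual but depends on having those two characterizations available in the right form (the Khas'minskii criterion on graphs is in \cite{KLW21}, and the Omori--Yau side is exactly what the paper uses in Example~\ref{exmp_2}). The sign bookkeeping and the handling of the finite core when $R>0$ are done correctly.
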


\begin{thm}\label{comparison 1}
Let $ \tilde{G} $ be a model graph and let $ G $ be a graph.
\begin{itemize}
\item[1)] If $ G $ has weaker curvature growth than $ \tilde{G} $ and $ \tilde{G} $ is $L^1$-Liouville, then $ G $ is $ L^1$-Liouville.
\item[2)] If $ G $ has stronger curvature growth than $ \tilde{G} $ and $ \tilde{G} $ is not $ L^1 $-Liouville, then $ G $ is not $ L^1 $-Liouville.
\end{itemize}
\end{thm}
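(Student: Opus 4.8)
The plan is to run the whole argument through the mean exit time characterization recorded above: $G$ is $L^1$-Liouville if and only if the functions $E_r$ (solving $\Delta E_r=1$ on $\mathrm{int}\,B_r(x_0)$ and $E_r=0$ on $\partial B_r(x_0)$) diverge at every point, and to feed it with the radialized mean exit time of the comparison model via a transplantation-plus-maximum-principle scheme, exactly in the spirit of Theorem \ref{comp_stoc} and of its Riemannian counterparts in \cite{BPS,PPS}. The computational backbone I would record first is the elementary identity that, for a radial function $w(x)=\phi(r(x))$ with $r(x)=d(x_0,x)$ and $x\in S_r(x_0)$, one has $\Delta w(x)=k_+(x)\bigl(\phi(r)-\phi(r+1)\bigr)+k_-(x)\bigl(\phi(r)-\phi(r-1)\bigr)$, since edges joining vertices of the same sphere contribute zero. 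On the model $\tilde G$, the Dirichlet mean exit times $\tilde E_\rho$ and, in the non-$L^1$-Liouville case, their finite limit $\tilde E$ are radial and satisfy $\tilde\Delta\tilde E_\rho(r)=1$ for $r<\rho$, respectively $\tilde\Delta\tilde E(r)=1$ for all $r$; from $E_r=\sum_{y\in B_r}g_r(\cdot,y)m(y)$ and \eqref{Green model} one computes $\tilde E_\rho(r)=\sum_{s=r}^{\rho-1}\tilde m(B_s)/\partial B(s)$ and $\tilde E(r)=\sum_{s=r}^{\infty}\tilde m(B_s)/\partial B(s)$, which shows in particular that both are nonnegative and nonincreasing in $r$. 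This monotonicity is exactly what makes the sign bookkeeping in the curvature comparison close up.

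For part 2), I would suppose $\tilde G$ is not $L^1$-Liouville and transplant $\tilde E$ to $G$ by $w(x):=\tilde E(r(x))$, which is finite everywhere. Using the formula for $\Delta w$, the inequalities $\tilde E(r)-\tilde E(r+1)\geq 0$ and $\tilde E(r)-\tilde E(r-1)\leq 0$, and stronger curvature growth ($k_+(x)\geq\tilde k_+(r)$, $k_-(x)\leq\tilde k_-(r)$), one gets $\Delta w(x)\geq\tilde k_+(r)\bigl(\tilde E(r)-\tilde E(r+1)\bigr)+\tilde k_-(r)\bigl(\tilde E(r)-\tilde E(r-1)\bigr)=1$ at every vertex of $G$, the root $x_0$ being included since there the $k_-$ term is absent on both sides and $k_+(x_0)\geq\tilde k_+(0)$ with $\tilde E(0)-\tilde E(1)\geq 0$. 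Then $w-E_\rho$ satisfies $\Delta(w-E_\rho)\geq 0$ on $\mathrm{int}\,B_\rho(x_0)$ and $w-E_\rho\geq 0$ on $\partial B_\rho(x_0)$ (where $E_\rho=0$ and $w\geq 0$), so $E_\rho\leq w$ on $B_\rho(x_0)$ by the maximum principle; letting $\rho\to\infty$ gives $E(x_0)\leq\tilde E(0)<\infty$, and $G$ is not $L^1$-Liouville.

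For part 1), since $\tilde G$ is $L^1$-Liouville we have $\tilde E\equiv\infty$ and I would transplant the finite approximations instead: on $B_\rho(x_0)\subseteq G$ set $w_\rho(x):=\tilde E_\rho(r(x))$. The same computation, now with weaker curvature growth and the nonincreasing $\tilde E_\rho$, yields $\Delta w_\rho(x)\leq\tilde\Delta\tilde E_\rho(r(x))=1$ whenever $r(x)\leq\rho-1$; at the remaining vertices of $\mathrm{int}\,B_\rho(x_0)$, which lie on $S_\rho$ and hence have $k_+=0$, one has $\Delta w_\rho(x)=k_-(x)\bigl(\tilde E_\rho(\rho)-\tilde E_\rho(\rho-1)\bigr)\leq 0\leq 1$ directly. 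Since $w_\rho=0=E_\rho$ on $\partial B_\rho(x_0)$, the function $E_\rho-w_\rho$ is subharmonic with nonnegative boundary values, so $E_\rho\geq w_\rho$ on $B_\rho(x_0)$, and in particular $E_\rho(x_0)\geq\tilde E_\rho(0)$. As $\tilde G$ is $L^1$-Liouville, $\tilde E_\rho(0)\to\infty$ as $\rho\to\infty$, whence $E(x_0)=\infty$ and $G$ is $L^1$-Liouville by the characterization.

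I expect the argument to be largely routine once this set-up is in place; the delicate point, and the one I would be most careful about, is the sign analysis in the curvature comparison, which only works because the transplanted function is simultaneously radial and monotone. The main obstacle is therefore to pin down the monotonicity and the explicit form of the model's mean exit times, and to handle the root and the outermost sphere separately, where the one-sided structure $k_-=0$, respectively $k_+=0$, must be used in place of the curvature hypothesis.
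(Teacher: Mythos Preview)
Your proof is correct. For part~2) it is essentially the paper's own argument: you transplant the model's mean exit time $\tilde E$ (the paper transplants its finite truncations $F_R(r)=\sum_{k=r}^{R-1}\tilde m(B_k)/\partial\tilde B(k)$ instead, then passes to the limit), show that the Laplacian of the transplant is $\geq 1$ under the curvature hypothesis, and apply the maximum principle against $E_R$; the only cosmetic difference is that you appeal directly to the monotonicity of $\tilde E$ to handle the signs, whereas the paper carries out the algebra explicitly via $\tilde m(B_r)=\tilde m(B_{r-1})+\tilde m(S_r)$ and $\tilde k_+(r-1)\tilde m(S_{r-1})=\tilde k_-(r)\tilde m(S_r)$. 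For part~1), however, your route is genuinely different: the paper simply observes that on a model the $L^1$-Liouville property is equivalent to stochastic completeness, invokes Theorem~\ref{comp_stoc} to conclude that $G$ is stochastically complete, and then applies Corollary~\ref{stoc_impl_L1}. Your direct transplantation of $\tilde E_\rho$ and the reverse comparison $E_\rho\geq w_\rho$ avoid any detour through stochastic completeness and make part~1) entirely parallel to part~2); this is more self-contained (and requires the extra care you took with interior vertices on $S_\rho$, where $k_+=0$ under weaker curvature), while the paper's argument is a one-line harvest of results already proved. One terminological quibble: in the paper's sign convention $\Delta u\geq 0$ is \emph{super}harmonic, so in your part~1) ``$E_\rho-w_\rho$ is subharmonic'' should read ``superharmonic''; the conclusion $E_\rho\geq w_\rho$ via the minimum principle is unaffected.
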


\begin{proof}
The first statement follows immediately by combining the equivalence between $ L^1 $-Liouville property and stochastic completeness on model graphs described in  Theorem \ref{comp_stoc} and Corollary \ref{stoc_impl_L1}.

To prove the second statement, note that, by the above considerations, we only need to show that $ E(x) = \sum_{y \in G} g(x,y)m(y) < \infty $ for some $ x \in G $. To do so, we first notice that the curvature assumption implies that, for every $R$, $\mathrm{int}\, B_R(x_0)=B_{R-1}(x_0)$ and $\partial B_R(x_0)=S_R(x_0)$. Next, consider the spherically symmetric function on $ \tilde{G} $ given by
\begin{equation*}
F_R(r) = \sum_{k=r}^{R-1} \frac{\tilde{m}(B_k)}{\partial \tilde{B}(k)},
\end{equation*}
where, as usual, we set $ \partial \tilde{B}(k) = \tilde{k}_+(k) \tilde{m}(S_k) $ for every $ k \geq 0 $. By the assumption of stochastic incompleteness and Theorem \ref{stochcompletenessmodels} we have that the function
\begin{equation*}
F(r)= \sum_{k=r}^{\infty} \frac{\tilde{m}(B_k)}{\partial \tilde{B}(k)}
\end{equation*}
is finite for every $ r $ and by monotonicity we get $ F_R \nearrow F $.
Consider now the transplanted function $ F_R(r(\cdot)) $ on $ G $ and note that the curvature condition yields
\begin{equation*}
\begin{split}
\Delta {F}_R(r(x_0)) &= \frac{1}{m(x_0)} \sum_{y \in S_1(x_0)} \left( F_R(r(x_0))-F_R(r(y)) \right) b(x_0,y) \\
& = k_+(x_0) \frac{\tilde{m}(o)}{\tilde{k}_+(o) \tilde{m}(o)} \geq 1
\end{split}
\end{equation*}
and, for  $ x \in S_r(x_0) $,  $ 0<r \leq R-1 $,
\begin{equation*}
\begin{split}
\Delta F_R(r(x)) &= \frac{1}{m(x)} \sum_{y \in S_{r+1}(x_0)} \left( F_R(r)-F_R(r+1) \right) b(x,y) \\ & \quad + \frac{1}{m(x)} \sum_{y \in S_{r-1}(x_0)} \left( F_R(r)-F_R(r-1) \right) b(x,y) \\
& = k_+(x) \frac{\tilde{m}(B_r)}{\tilde{k}_+(r) \tilde{m}(S_r)} - k_-(x) \frac{\tilde{m}(B_{r-1})}{\tilde{k}_+(r-1) \tilde{m}(S_{r-1})}.
\end{split}
\end{equation*}
Writing $ \tilde{m}(B_r) = \tilde{m}(B_{r-1}) + \tilde{m}(S_r) $, using the fact that (see, for example, \cite{KLW})
\begin{equation*} \tilde{k}_+(r-1) \tilde{m}(S_{r-1}) = \tilde{k}_-(r) \tilde{m}(S_r) = \frac{\tilde{k}_-(r) \partial \tilde{B}(r)}{\tilde{k}_+(r)}
\end{equation*}
and the hypothesis of stronger curvature growth, we conclude that, for every $ x \in S_r(x_0) $, $ 0<r \leq R-1 $,
\begin{equation*}\label{ineq}
\begin{split}
\Delta F_R(r(x)) & = \frac{\tilde{m}(B_{r-1})}{\partial \tilde{B}(r)} \left[ k_+(x) - k_-(x) \frac{\tilde{k}_+(r)}{\tilde{k}_-(r)} \right] + k_+(x) \frac{\tilde{m}(S_r)}{\partial \tilde{B}(r)} \\
& \geq \frac{\tilde{k}_+(r) \tilde{m}(S_r)}{\partial \tilde{B}(r)} = 1.
\end{split}
\end{equation*}
Summing up we have that the function $ F_R(r(\cdot))-
E_R(\cdot) $ satisfies
\begin{equation*}
\begin{cases}
\Delta \left( F_R(r(\cdot)) - E_R(\cdot) \right) \geq 0 & \text{ in }\mathrm{int}\,B_{R}(x_0) \\
F_R(r(\cdot))- E_R(\cdot)=0 & \text{ on } \partial B_R(x_0).
\end{cases}
\end{equation*}
By the maximum principle we then conclude that $ F_R(r(\cdot)) \geq E_R(\cdot) $ over $ B_R(x_0) $, so that, by passing to the limit as $ R \to \infty $, we have that $ \infty > F(r(\cdot)) \geq E(\cdot) $ and $ G $ is not $ L^1 $-Liouville.
\end{proof}

\begin{remark}
Note that since stochastic completeness/incompleteness of a (model) graph is not affected by finite sets perturbations (see, \cite{XH11, KL12}), the conclusions of Theorem~\ref{comparison 1} hold if the curvature assumptions hold outside a finite set.

Note also that Statement 2 of the above theorem is valid under the assumption of stronger curvature growth as in \cite[Definition 3.1]{A}.
\end{remark}

\section{The Dirichlet $L^1$-Liouville property}\label{sec:DirL_1}

In this section we introduce the concept of
$ \mathcal{D}$-$L^1$-Liouville property (Dirichlet Liouville property)
of infinite subgraphs modeled on the construction carried out in \cite[Section 3]{PPS}  in the setting of Riemannian manifolds. After providing a useful characterization, we prove a criterion for the validity of the $ L^1 $-Liouville property and deduce that a graph is $ L^1$-Liouville if and only if at least one of its ends is
$ \mathcal{D}$-$L^1$-Liouville. This is in contrast to what happens for stochastic completeness, which holds provided that every end of the graph is stochastically complete (see \cite{KL12,XH11}, where ideas similar  to those one in \cite{BB09} are developed in the context of weighted graphs).

Let $ N \subset G $ be an infinite, connected subgraph of $ G $.
Consider an exhaustion by finite sets $ \{ \Omega_r \} $ of $\mathrm{int\,} N $. Given $ x_0 \in \mathrm{int\, }N $, we denote by $ ^{\mathcal{D}}g_r $  the positive solution of the following system:
\begin{equation*}
\begin{cases}
\Delta {}^{\mathcal D}\! g_r (x_0,y) = \frac{\delta_{x_0}(y)}{m(x_0)} & \text{ in int\,} \Omega_r \\
{}^{\mathcal D}\! g_r(x_0,y) = 0 & \text{ on } \partial \Omega_r.
\end{cases}
\end{equation*}
Exactly as seen before, the sequence $ {}^{\mathcal D}\! g_r $ is increasing and converges to a function $ ^{\mathcal{D}}g $ which is the Dirichlet Green function on $ N $.

We are now ready to give the following definition.

\begin{defn}
We say that a subgraph $ N \subset G $ is $\mathcal{D}$-$L^1$-Liouville if every non-negative function in $ L^1(N) $ which satisfies
\begin{equation*}
\begin{cases}
\Delta u \geq 0 & \text{ in int\,} N \\
u = 0 & \text{ on } \partial N
\end{cases}
\end{equation*}
vanishes identically.
\end{defn}

In order to give a characterization of the  $\mathcal{D}$-$L^1$-Liouville property of a subgraph we introduce the notion of Dirichlet Mean Exit Time, which will be crucial for our derivations.

\begin{defn}
We denote by ${}^{\mathcal D}\! E $ the minimal positive solution of the system
\begin{equation}\label{min}
\begin{cases}
\Delta {}^{\mathcal D}\! E = 1 & \text{ in int\,} N \\
{}^{\mathcal D}\! E = 0 & \text{ on } \partial N.
\end{cases}
\end{equation}
\end{defn}
\begin{remark}\label{remark_L1}
Note that $ {}^{\mathcal D}\!  E $ can be obtained via exhaustion of $ N $. Indeed, let ${}^{\mathcal D}\! E_r $ denote the solution of
\begin{equation}\label{min_ex}
\begin{cases}
\Delta {}^{\mathcal D}\! E_r = 1 & \text{ in int\,} \Omega_r \\
{}^{\mathcal D}\! E_r = 0 & \text{ on } \partial \Omega_r
\end{cases}
\end{equation}
for an exhaustion $ \{ \Omega_r \} $ of $ N $. The same argument used in the previous section shows that $ {}^{\mathcal D}\! {E_r} $ either diverges at every point, or it converges to a function ${}^{\mathcal D}\! E $ which is the minimal solution of \eqref{min}.
Moreover,  applying the maximum principle  to the function $E- { }^{\mathcal{D}}\!E_r $ on  $ \Omega_r $ shows that $ E \geq { }^{\mathcal{D}}\!E_r $, so that $ E \geq {}^{\mathcal D}\! E $.
\end{remark}

We can now give a characterization of the $\mathcal{D}$-$L^1$-Liouville property similar to the one given above for the $ L^1$-Liouville property which relates the former to the nonintegrability of the Dirichlet Green function.

\begin{thm}
The following are equivalent:
\begin{itemize}
\item[1)] $ N $ is not $ \mathcal{D}$-$L^1$-Liouville;
\item[2)] ${ }^{\mathcal{D}}\!g(x, \cdot) $ is in $ L^1(N) $ for every $ x \in N $;
\item[3)] $ { }^{\mathcal{D}}\!E $ is finite for every $ x \in N $ and
\begin{equation*}
{ }^{\mathcal{D}}\!E (x) = \sum_{y \in N} { }^{\mathcal{D}}\!g(x,y) m(y).
\end{equation*}
\end{itemize}
\end{thm}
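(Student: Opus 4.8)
The plan is to prove the chain of implications $3) \Rightarrow 2) \Rightarrow 1) \Rightarrow 3)$, following the template of Theorem~\ref{non-int} but now carrying the Dirichlet boundary condition on $\partial N$ throughout. First note that $3)$ already contains the statement that ${}^{\mathcal D}\!E$ is finite, and the identity ${}^{\mathcal D}\!E(x) = \sum_{y\in N}{}^{\mathcal D}\!g(x,y)m(y)$ is obtained exactly as in Subsection~\ref{ssec:mean}: one has the exhaustion formula ${}^{\mathcal D}\!E_r(x) = \sum_{y\in\Omega_r}{}^{\mathcal D}\!g_r(x,y)m(y)$ (both sides solve \eqref{min_ex} by linearity of $\Delta$ and the defining property of ${}^{\mathcal D}\!g_r$), and then monotone convergence gives ${}^{\mathcal D}\!E(x) = \sum_{y\in N}{}^{\mathcal D}\!g(x,y)m(y)$ whenever the left side is finite. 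So $3)$ is equivalent to the single assertion that $\sum_{y\in N}{}^{\mathcal D}\!g(x,\cdot)m(y)<\infty$, i.e.\ ${}^{\mathcal D}\!g(x,\cdot)\in L^1(N)$, which is precisely $2)$; thus $2)\Leftrightarrow 3)$ is essentially bookkeeping.

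For $2)\Rightarrow 1)$: if ${}^{\mathcal D}\!g(x,\cdot)\in L^1(N)$, then ${}^{\mathcal D}\!g(x,\cdot)$ itself is a non-negative, non-identically-zero function in $L^1(N)$ satisfying $\Delta\,{}^{\mathcal D}\!g(x,\cdot)\geq 0$ on $\mathrm{int}\,N\setminus\{x\}$ and vanishing on $\partial N$; a harmless adjustment (subtracting a small multiple of $\Delta$ at the pole, or just noting that the definition of $\mathcal D$-$L^1$-Liouville tolerates $\Delta u \ge 0$ with an isolated exception handled by the strong minimum principle) shows $N$ is not $\mathcal D$-$L^1$-Liouville. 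Cleanest is to note that ${}^{\mathcal D}\!E$ is, by $3)$, a non-negative $L^1$ function vanishing on $\partial N$ with $\Delta\,{}^{\mathcal D}\!E = 1 \geq 0$, and it is not identically zero since $\Delta\,{}^{\mathcal D}\!E=1$; hence $N$ is not $\mathcal D$-$L^1$-Liouville.

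The substantive implication is $1)\Rightarrow 3)$, and this is where the work lies. Suppose $u\geq 0$, $u\in L^1(N)$, $\Delta u\geq 0$ on $\mathrm{int}\,N$, $u=0$ on $\partial N$, and $u\not\equiv 0$. By the strong minimum principle $u>0$ on $\mathrm{int}\,N$; fix $x_0\in\mathrm{int}\,N$ and pick $C>0$ with ${}^{\mathcal D}\!g(x_0,x_0)\leq C\,u(x_0)$. The function $v_r := C u - {}^{\mathcal D}\!g_r(x_0,\cdot)$ satisfies $\Delta v_r\geq 0$ on $\mathrm{int}\,\Omega_r\setminus\{x_0\}$ and $v_r\geq 0$ on $(\partial\Omega_r)\cup\{x_0\}$ — here the boundary inequality on $\partial\Omega_r\cap\partial N$ uses $u=0$ and ${}^{\mathcal D}\!g_r=0$, and on $\partial\Omega_r\cap\mathrm{int}\,N$ it uses $u\geq 0$ and ${}^{\mathcal D}\!g_r=0$. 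The maximum principle gives $v_r\geq 0$ on $\Omega_r$; letting $r\to\infty$ yields $C u\geq {}^{\mathcal D}\!g(x_0,\cdot)$, whence ${}^{\mathcal D}\!g(x_0,\cdot)\in L^1(N)$ because $u$ is, i.e.\ $2)$ (hence $3)$) holds. The main obstacle is simply the bookkeeping of the two-part boundary $\partial\Omega_r = (\partial\Omega_r\cap\partial N)\cup(\partial\Omega_r\cap\mathrm{int}\,N)$ and making sure the exhaustion $\{\Omega_r\}$ of $\mathrm{int}\,N$ interacts correctly with $\partial N$ — and, for $3)$, confirming that finiteness of ${}^{\mathcal D}\!g(x_0,\cdot)$ in $L^1$ forces ${}^{\mathcal D}\!E(x)<\infty$ for \emph{every} $x$, which follows from the connectedness propagation argument already used for $E$ in Subsection~\ref{ssec:mean}.
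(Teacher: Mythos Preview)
Your approach is essentially the same as the paper's: you establish $2)\Leftrightarrow 3)$ via the exhaustion identity ${}^{\mathcal D}\!E_r(x)=\sum_{y\in\Omega_r}{}^{\mathcal D}\!g_r(x,y)m(y)$ and monotone convergence, and for $1)\Rightarrow 2)$ you compare $Cu$ with ${}^{\mathcal D}\!g_r(x_0,\cdot)$ via the maximum principle exactly as the paper does.

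Two corrections are worth making. First, your worry about the pole in $2)\Rightarrow 1)$ is unnecessary: by definition $\Delta_y\, {}^{\mathcal D}\!g(x,y)=\delta_x(y)/m(x)\geq 0$ for \emph{every} $y\in\mathrm{int}\,N$, including $y=x$, so ${}^{\mathcal D}\!g(x,\cdot)$ is super-harmonic on all of $\mathrm{int}\,N$ with no adjustment needed. This is precisely how the paper proceeds.

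Second, and more importantly, your ``cleanest'' alternative has a genuine gap: statement $3)$ only asserts that ${}^{\mathcal D}\!E(x)$ is \emph{finite} for each $x$, not that ${}^{\mathcal D}\!E\in L^1(N)$. The latter would amount to $\sum_{x}\sum_{y}{}^{\mathcal D}\!g(x,y)\,m(x)\,m(y)<\infty$, which does not follow from $2)$ or $3)$ and is in general false. So the argument via ${}^{\mathcal D}\!E$ does not produce an $L^1$ witness; you should drop it and use ${}^{\mathcal D}\!g(x,\cdot)$ directly, as in your first suggestion (and as the paper does).
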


\begin{proof}
$ 1) \Leftrightarrow 2) $: Let $0\leq u \in L^1$ be a function violating the $ \mathcal{D}$-$L^1$-Liouville property. Then
\begin{equation*}
\begin{cases}
\Delta u \geq 0 & \text{ in } \mathrm{int}\, N \\
u \geq 0 & \text{ on } \partial N.
\end{cases}
\end{equation*}
Since there exists $ x_0 \in N $ such that $ u(x_0) > 0 $ we get that $ u > 0 $ on $ N $ by the minimum principle.

Let $ C > 1 $ be a large enough constant such that
\begin{equation*}
{}^{\mathcal D}\! g(x_0,x_0) \leq C u(x_0).
\end{equation*}
Applying now the maximum principle to the functions $ u- {}^{\mathcal D}\! g_r $ on an exhaustion $ \{ \Omega_r \} $ of $ N \setminus \{ x_0 \} $, where $ {}^{\mathcal D}\! g_r $ is harmonic, we get that ${}^{\mathcal D}\! g(x_0,y) \leq C u(y) $ holds for every $ y \in N $. It follows that
\begin{equation*}
\sum_{y \in N} {}^{\mathcal D}\! g(x_0,y) m(y) \leq C \sum_{y \in N} u(y) m(y) < \infty.
\end{equation*}
On the other hand, if $ {}^{\mathcal D}\! g(x, \cdot) \in L^1(N) $, then $ {}^{\mathcal D}\! g(x, \cdot) $ is a summable, non-negative, non-constant, super-harmonic function violating the $ \mathcal{D} $-$L^1$-Liouville.

$ 2) \Leftrightarrow 3) $ Consider an exhaustion $ \{ \Omega_r \} $ of $ N $ and let $ {}^{\mathcal D}\! g_r $ denote the corresponding Dirichlet Green function. It is clear by construction that the function $ \sum_{y \in \Omega_r}{}^{\mathcal D}\!  g_r(x,y) m(y) $ is the unique solution to problem \eqref{min_ex}, so that it coincides with ${}^{\mathcal D}\! E_r $. By passing to the limit as $ r \to \infty $ we get
\begin{equation*}
{}^{\mathcal D}\! E(x) = \lim_{r \to \infty}{}^{\mathcal D}\!  E_r(x) = \lim_{r \to \infty} \sum_{y \in N} {}^{\mathcal D}\!  g_r(x,y) \chi_{\Omega_r}(y) m(y) = \sum_{y \in N}{}^{\mathcal D}\!  g(x,y) m(y),
\end{equation*}
where we used the fact that ${}^{\mathcal D}\!  g_r(x,y) \nearrow {}^{\mathcal D}\! g(x,y) $ as $ r \to \infty $ and monotone convergence.
\end{proof}

As an immediate corollary we have the following result, which establishes a connection between the global $ L^1 $-Liouville and the $ \mathcal{D} $-$L^1$-Liouville of an (infinite) subgraph. This is an analogue of \cite[Corollary 30]{BPS} valid on Riemannian manifolds.

\begin{cor}
Let $ G $ be a graph and let $ N \subset G $ be an (infinite) subgraph of $ G $. If $ N $ is $ \mathcal{D}$-$L^1$-Liouville, then $ G $ is $ L^1 $-Liouville. Moreover, if there exists a sequence of subgraphs $ \{ N_k \}_k $ such that $ {}^{\mathcal D}\! E_{N_k}(x_0) \to \infty $ as $ k \to \infty $ for some $ x_0 \in D $, then $ G $ is $ L^1$-Liouville.
\end{cor}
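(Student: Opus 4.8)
The plan is to reduce the corollary to the characterization theorem just proved, namely the equivalence ``$N$ is not $\mathcal{D}$-$L^1$-Liouville $\iff$ ${}^{\mathcal D}\!E_N$ is finite'', together with the inequality ${}^{\mathcal D}\!E_N \leq E_G$ restricted to $N$ that was observed in Remark~\ref{remark_L1}, and finally the characterization of the global $L^1$-Liouville property in terms of the finiteness of $E_G$ given in Subsection~\ref{ssec:mean}.

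For the first assertion, I would argue by contraposition. Suppose $G$ is \emph{not} $L^1$-Liouville. Then by the remarks in Subsection~\ref{ssec:mean}, $E_G(x) = \sum_{y\in V} g(x,y)m(y) < \infty$ for every $x \in V$; in particular this holds for every $x \in N$. By Remark~\ref{remark_L1} (applied with the exhaustion $\{\Omega_r\}$ of $N$ and using the maximum principle on each $\Omega_r$ to compare $E_G$ with ${}^{\mathcal D}\!E_r$) we have $E_G(x) \geq {}^{\mathcal D}\!E_N(x)$ for every $x \in N$, so ${}^{\mathcal D}\!E_N(x) < \infty$ for every $x \in N$. By the equivalence $1)\Leftrightarrow 3)$ of the characterization theorem, this means $N$ is not $\mathcal{D}$-$L^1$-Liouville. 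Contrapositively, if $N$ is $\mathcal{D}$-$L^1$-Liouville then $G$ is $L^1$-Liouville.

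For the second assertion, fix $x_0$ and suppose we have subgraphs $N_k$ with ${}^{\mathcal D}\!E_{N_k}(x_0) \to \infty$. The key point is the monotonicity ${}^{\mathcal D}\!E_{N_k} \leq E_G$ on $N_k$, which holds for the same reason as above: for each $k$, exhausting $N_k$ by finite sets $\Omega_r^{(k)}\subset \mathrm{int\,}N_k$ and applying the maximum principle to $E_G - {}^{\mathcal D}\!E^{(k)}_r$ on $\Omega_r^{(k)}$ (where $E_G$ is a supersolution of $\Delta(\cdot)=1$ that is nonnegative on $\partial\Omega_r^{(k)}$, while ${}^{\mathcal D}\!E^{(k)}_r$ vanishes there) gives $E_G \geq {}^{\mathcal D}\!E^{(k)}_r$ on $\Omega_r^{(k)}$, and letting $r\to\infty$ yields $E_G(x_0) \geq {}^{\mathcal D}\!E_{N_k}(x_0)$ for every $k$. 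Letting $k\to\infty$ forces $E_G(x_0) = \infty$, so $G$ is $L^1$-Liouville by the mean-exit-time characterization. (There is a small caveat: one must have $x_0 \in \mathrm{int\,}N_k$ for all $k$ — or at least eventually — for ${}^{\mathcal D}\!E_{N_k}(x_0)$ to be meaningfully defined and positive; the statement should be read with this understanding, which I would make explicit.)

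The main obstacle, such as it is, is entirely bookkeeping rather than conceptual: one must be careful that the maximum-principle comparison between the \emph{global} object $E_G$ (or $g$) and the \emph{Dirichlet} object on $N_k$ is legitimate, i.e.\ that $E_G$ restricted to $N_k$ is still a supersolution of $\Delta u = 1$ on $\mathrm{int\,}\Omega_r^{(k)}$. Since the Laplacian is non-local, $\Delta E_G$ computed at $x \in \mathrm{int\,}N_k$ uses values of $E_G$ at neighbors that may lie outside $N_k$; but this is harmless because $\Delta E_G(x) = 1$ there regardless, and $E_G \geq 0 = {}^{\mathcal D}\!E^{(k)}_r$ on $\partial \Omega^{(k)}_r$, so the finite maximum principle applies verbatim on $\Omega_r^{(k)}$. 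Once this is observed, both statements follow in a few lines.
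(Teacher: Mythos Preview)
Your proposal is correct and follows precisely the same route as the paper's proof, which consists of the single line ``Both statements follow by combining the previous theorem and Remark~\ref{remark_L1}.'' You have simply unpacked these two ingredients---the characterization of the $\mathcal{D}$-$L^1$-Liouville property via finiteness of ${}^{\mathcal D}\!E_N$, and the comparison ${}^{\mathcal D}\!E_N\leq E_G$ from Remark~\ref{remark_L1}---and your additional paragraph on the non-locality issue is a welcome clarification of a point the paper leaves implicit.
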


\begin{proof}
Both statements follow by combining the previous theorem and Remark \ref{remark_L1}.
\end{proof}

We can now state and prove the main result of this section, which says that the validity of the $ L^1 $-Liouville property depends on the validity of the $ \mathcal{D}$-$L^1$-Liouville property on (at least) one of its ends.

\begin{thm}\label{end_thm}
Let $ G=(V,b,m) $ be a graph.
\begin{itemize}
\item[1)] Let $ N=(W,b_{|W\times W},m_{|W}) $ be a subgraph of $ G $ such that $ V \setminus W $ is finite.
    Then $ G $ is $ L^1$-Liouville if and only if $ N $ is $ \mathcal{D}$-$L^1$-Liouville.
\item[2)] $ G $ is $ L^1 $-Liouville if and only if at least one of its ends is $ \mathcal{D}$-$L^1$-Liouville.
\end{itemize}
\end{thm}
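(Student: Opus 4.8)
The plan is to prove statement 1) first and then deduce statement 2) from it. For 1), one direction is immediate: if $G$ is $L^1$-Liouville, then $N$ must be $\mathcal{D}$-$L^1$-Liouville, since any non-negative $u\in L^1(N)$ with $\Delta u\geq 0$ on $\mathrm{int}\,N$ and $u=0$ on $\partial N$ can be extended by $0$ to all of $V$; because $V\setminus W$ is finite, the extension $\bar u$ lies in $L^1(V,m)$, and one checks that $\Delta\bar u\geq 0$ holds at every vertex (at the finitely many vertices of $V\setminus W$ and on $\partial N$ the value is $0$ while neighbours are non-negative, so $\Delta\bar u\geq 0$ there; in $\mathrm{int}\,N$ it agrees with $\Delta u$). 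Wait --- this needs a small care: $\bar u$ need not be super-harmonic where $V\setminus W$ meets $\mathrm{int}\,N$'s complement, but since $\bar u\ge 0$ everywhere and equals $0$ outside $W$, at any vertex $x$ with $\bar u(x)=0$ we have $\Delta\bar u(x)=\frac1{m(x)}\sum_y b(x,y)(0-\bar u(y))\le 0$ only if some neighbour is positive, so in fact $\bar u$ is \emph{sub}-harmonic there. The honest fix is to instead argue directly: if $G$ is $L^1$-Liouville we use the characterization (Theorem \ref{non-int}) that $\sum_y g(x,y)m(y)=\infty$; comparing $g$ with ${}^{\mathcal D}\!g_N$ via the maximum principle on an exhaustion and using that $g-{}^{\mathcal D}\!g$ is harmonic and bounded on $N$ (because $V\setminus W$ is finite, boundary data are controlled), one gets $\sum_{y\in N}{}^{\mathcal D}\!g(x,y)m(y)=\infty$, hence $N$ is $\mathcal{D}$-$L^1$-Liouville by the preceding theorem. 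I would present this direction carefully, as it contains a genuine subtlety.

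For the converse direction of 1) --- if $N$ is $\mathcal{D}$-$L^1$-Liouville then $G$ is $L^1$-Liouville --- the quickest route is the Corollary just proved: a $\mathcal{D}$-$L^1$-Liouville subgraph forces $G$ to be $L^1$-Liouville. So in fact the substance of 1) is only the first direction together with an invocation of the Corollary; the new content of 1) over the Corollary is precisely the ``only if'' part, which is where the cofiniteness of $V\setminus W$ is used. I would organise the write-up to make this structure transparent: state that ``if'' is the Corollary, and spend the effort on ``only if''.

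For statement 2), recall that an end of $G$ (with respect to an exhaustion) is, by definition, an unbounded connected component $N$ of $V\setminus K$ for $K$ finite, so each end is a subgraph with $V\setminus W$ \emph{not} finite in general --- there may be other ends. To reduce to case 1), given an end $N$, let $K$ be the finite set whose removal produces it and let $N'=(W',\dots)$ be the subgraph on $W'=V\setminus K$; then $V\setminus W'=K$ is finite, so by 1), $G$ is $L^1$-Liouville iff $N'$ is $\mathcal{D}$-$L^1$-Liouville. The remaining point is to relate $\mathcal{D}$-$L^1$-Liouville of $N'$ (a disjoint union of finitely many ends plus possibly bounded pieces) to that of a single end $N$: here I would use the characterization via the Dirichlet mean exit time, noting that ${}^{\mathcal D}\!E_{N'}\ge {}^{\mathcal D}\!E_{N}$ pointwise on $N$ by the minimum principle (fewer constraints), so if ${}^{\mathcal D}\!E_N\equiv\infty$ (i.e. $N$ is $\mathcal{D}$-$L^1$-Liouville) then ${}^{\mathcal D}\!E_{N'}\equiv\infty$ on $N$, hence on each component, hence $N'$ is $\mathcal{D}$-$L^1$-Liouville and $G$ is $L^1$-Liouville. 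Conversely, if $G$ is $L^1$-Liouville, then $N'$ is $\mathcal{D}$-$L^1$-Liouville, and one must show at least one end of $N'$ is; this uses that ${}^{\mathcal D}\!E_{N'}\equiv\infty$ and that ${}^{\mathcal D}\!E_{N'}$ restricted to a component $N$ dominates ${}^{\mathcal D}\!E_N$ only in one direction --- the subtle inequality goes the wrong way, so instead I would argue that the restriction of a function violating $\mathcal{D}$-$L^1$-Liouville on $N'$, if one existed on \emph{every} end, could be glued to violate it on $N'$, a contradiction; so some end admits no such function.

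The main obstacle I anticipate is the ``only if'' direction of 1): transplanting non-integrability of $g$ on $G$ down to non-integrability of ${}^{\mathcal D}\!g$ on $N$ requires showing that the harmonic function $h=g(x_0,\cdot)-{}^{\mathcal D}\!g(x_0,\cdot)$ on $\mathrm{int}\,N$ (extended suitably) does not absorb all the mass --- i.e. that $h\in L^1(N)$, which one gets because $h$ extends to a bounded function on $G$ determined by values on the finite set $V\setminus W$ together with super-harmonicity, but making ``bounded'' precise in the non-parabolic setting, and handling the possibility that $h$ is not globally bounded, is the delicate step. The secondary obstacle is the gluing argument in 2), ensuring that combining $L^1$ functions on different ends produces a genuine element of $L^1(N')$ violating the Dirichlet property --- routine once set up, but it must be stated cleanly.
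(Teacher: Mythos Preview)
Your overall architecture is right: the ``if'' direction of 1) is the Corollary, the content is the ``only if'' direction, and 2) follows by decomposing $G\setminus K$ into ends and gluing the Dirichlet mean exit times. Part 2) of your plan is essentially what the paper does (it simply observes that ${}^{\mathcal D}\!E_{G\setminus K}=\sum_i {}^{\mathcal D}\!E_{F_i}$ since the ends have disjoint boundaries, then invokes 1)).

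The gap is in your approach to the ``only if'' direction of 1). You propose to write $h=g(x_0,\cdot)-{}^{\mathcal D}\!g(x_0,\cdot)$ and argue that $h\in L^1(N)$ because $h$ is bounded. But boundedness does not imply integrability when $m(N)=\infty$, and indeed $h$ is a non-negative bounded harmonic function on $\mathrm{int}\,N$ with no reason to decay; in general it will \emph{not} be in $L^1$. There is no Poisson-representation fix here either, since the harmonic measure of the finite boundary $\partial N$ need not be summable against $m$. So the route ``transplant non-integrability of $g$ to non-integrability of ${}^{\mathcal D}\!g$ via $h\in L^1$'' does not go through.

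The paper avoids this entirely by reversing the comparison: rather than pushing $g$ down to ${}^{\mathcal D}\!g$, it lifts an arbitrary candidate $u$ on $N$ up against the \emph{full} Green function $g$ of $G$. Concretely, take any non-trivial $u\geq 0$ with $\Delta u\geq 0$ on $\mathrm{int}\,N$ and $u=0$ on $\partial N$. Choose a finite $\Omega_1$ with $V\setminus W\subset\mathrm{int}\,\Omega_1$ and $\partial N\subset\mathrm{int}\,\Omega_1$; then $\partial\Omega_1\subset\mathrm{int}\,N$, so by the strong minimum principle $u\geq c_1>0$ on $\partial\Omega_1$, while $g(\cdot,y_0)\leq c_2<\infty$ there for a fixed $y_0\in\Omega_1$. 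Hence $u\geq\lambda g(\cdot,y_0)$ on $\partial\Omega_1$ for some $\lambda>0$. Applying the maximum principle to $u-\lambda g_r(\cdot,y_0)$ on $\Omega_r\setminus\Omega_1$ (where $g_r$ is the Dirichlet Green kernel of $\Omega_r$, harmonic away from $y_0$) and letting $r\to\infty$ gives $u\geq\lambda g(\cdot,y_0)$ on $V\setminus\Omega_1$, whence $\sum_{W}u\,m\geq\lambda\sum_{W\setminus\Omega_1}g(\cdot,y_0)\,m=\infty$. This is the missing idea: compare with $g$, not ${}^{\mathcal D}\!g$, and use that $u$ is strictly positive on the finite inner boundary $\partial\Omega_1$.
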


\begin{proof}
$1)$ In view of the previous corollary, we only need to prove that if $G$ is $L^1$-Liouville, then $ N $ is $ \mathcal{D}$-$L^1$-Liouville.

Let $ g $ be the Green kernel of $ G $. By assumption and Theorem~\ref{non-int}
\begin{equation*}
\sum_{x \in V} g(x,y) m(x) = \infty
\end{equation*}
for every $ y \in V $.  By construction  $ g $ is obtained as monotone limit of the Dirichlet Green kernels $g_r$  of an exhaustion $ \{ \Omega_r\}$ of $ G $, and we may assume that
$ V \setminus W \subset \mathrm{int\,}\Omega_1 $. Let $ u $ be a non-negative, non-constant  function such that $ \Delta u \geq 0 $ in $ \mathrm{int} N $ and $ u=0 $ in $ \partial N $. We need to show that $ u $ is not integrable. By the minimum principle $ u > 0 $ in $\mathrm{int \,}N$ and therefore  there exists a constant $ c_1 > 0 $ such that
\begin{equation*}
\min_{\partial \Omega_1} u = c_1>0.
\end{equation*}
On the other hand, having fixed  $ y_0 \in \Omega_1 $, we have that
\begin{equation*}
\max_{x \in \partial \Omega_1} g(x,y_0) = c_2 < \infty,
\end{equation*}
so that there exists a positive constant $ \lambda > 0 $ such that $ u(x) \geq \lambda g(x,y_0) $ for every $ x \in \partial \Omega_1 $. Since $ g \geq g_{r} $ it follows that $ u \geq \lambda  g_{r} $ on $ \partial \Omega_r \cup \partial \Omega_1$ and, by the maximum principle, we conclude that $ u \geq \lambda g_{r} $ in $ \Omega_r \setminus \Omega_1 $ and therefore $ u \geq \lambda g $ in $ V \setminus \Omega_1 $. Finally,
\begin{equation*}
\sum_{x \in W} u(x) m(x) \geq \lambda \sum_{x \in W \setminus \Omega_1} g(x,y_0) m(x) = \infty
\end{equation*}
as required to show that $ N $ is $ \mathcal{D}$-$L^1$-Liouville.

$ 2) $ We already know that if at least one end is $ \mathcal{D}$-$L^1$-Liouville, then $ G $ is $ L^1 $-Liouville. Next, suppose  by contradiction that no end of $ G $ is $ \mathcal{D}$-$L^1$-Liouville. Then there exists a finite set $ K $ such that $ G \setminus K = \bigcup_{i=1}^{m} F_i $, where $ F_i $ are ends of $ G $ and $ {}^{\mathcal D}\! {E_{F_i}} $ are finite functions for every $ i=1,...,m $. Since $ \partial F_i \cap \partial F_j = \emptyset $ if $ i \neq j $, setting $ {}^{\mathcal D}\! E_{F_i} = 0 $ outside $ F_i $ it follows that  ${}^{\mathcal D}\! E_{G \setminus K} = {}^{\mathcal D}\! E_{F_1} + \ldots + {}^{\mathcal D}\! E_{F_m} $, which is finite, and the contradiction follows from part $ i) $.
\end{proof}

%

\bigskip
\bigskip
\noindent



\end{document}